\documentclass[a4paper, 12pt]{article}

\usepackage[english]{babel}
\usepackage{authblk}

\usepackage[a4paper,top=30mm,left=25mm,right=25mm,marginparwidth=1.75cm]{geometry}

\usepackage{empheq}
\usepackage{graphicx}
\usepackage[colorinlistoftodos]{todonotes}
\usepackage{amsmath}
\usepackage{mathtools}
\usepackage{upgreek}
\usepackage{amssymb}
\usepackage{amsfonts}
\usepackage{amsthm}
\usepackage{enumitem}
\usepackage{mathrsfs}
\usepackage{fontenc}
\usepackage{verbatim}
\usepackage{booktabs}
\usepackage{subcaption}
\usepackage{framed}
\usepackage{algorithm}
\usepackage{bbm}
\usepackage{algpseudocode}
\usepackage[title, titletoc, header]{appendix}
\usepackage{color}
\usepackage{multirow}
\usepackage{xcolor}
\usepackage{microtype}
\usepackage[colorlinks=true, citecolor=blue, linkcolor=blue, urlcolor=blue, hidelinks]{hyperref}
\usepackage{cleveref}
\usepackage{caption}
\usepackage{mfirstuc}
\usepackage{microtype}
\usepackage[figuresright]{rotating}

\newcommand{\N}{\mathbb N}
\newcommand{\E}{\mathbb E}
\newcommand{\F}{\mathbb F}
\newcommand{\R}{\mathbb R}
\newcommand{\bP}{\mathbb P}

\newcommand{\cF}{\mathcal F}
\newcommand{\cL}{\mathcal L}
\newcommand{\cS}{\mathcal S}
\newcommand{\cU}{\mathcal U}
\newcommand{\cV}{\mathcal V}
\newcommand{\cX}{\mathcal X}
\newcommand{\cY}{\mathcal Y}
\newcommand{\cZ}{\mathcal Z}

\numberwithin{equation}{section}
\theoremstyle{plain}
\newtheorem{thm}{Theorem}[section]

\newtheorem{prop}{Proposition}[section]

\newtheorem{cor}{Corollary}[section]

\newtheorem{assu}{Assumption}

\newtheorem{defn}{Definition}[section]
\newtheorem*{rem}{Remark}

\theoremstyle{definition}

\theoremstyle{remark}

\title{A Posteriori Error Analysis for Decoupled Neural Approximations of Fully Coupled FBSDEs with Control Mismatch}

\author[1]{Xichuan \textsc{Zhang}
}
\affil[1]{Intelligent Game and Decision Lab, Beijing 100091, China}

\date{}

\begin{document}

\maketitle


\begin{abstract}
This paper develops an a posteriori error analysis framework for decoupled neural approximations of fully coupled forward--backward stochastic differential equations (FBSDEs).
It provides an a posteriori error-analysis for the idealized discrete adapted trajectory.
The main feature of the proposed formulation is the use of an auxiliary control process in the forward coefficients,
which may differ from the backward component approximated by the neural network.
This decoupling is useful in practical deep learning implementations,
but it creates a control mismatch that must be included in the error analysis.

We first establish a continuous-time stability estimate for fully coupled FBSDEs under perturbations of the drift, diffusion, generator, terminal condition, and auxiliary control input.
We then transfer this estimate to the discrete-time setting and derive computable a posteriori error bounds depending only on the terminal defect, the pathwise residual, and the control mismatch.
When the auxiliary control is identified with the backward approximation, the mismatch term vanishes and the bound reduces to the standard two-term form.
Numerical experiments on a linear--quadratic FBSDE with an explicit reference solution and a multidimensional Burgers-type FBSDE without a reference solution illustrate the diagnostic role of the proposed indicators and the contribution of the mismatch penalty to the consistency and reproducibility of the numerical approximations

\textbf{Keywords} fully coupled FBSDE, deep BSDE method, a posteriori error estimate, control mismatch, deep neural networks

\textbf{MSC codes} 65C30, 60H35, 68T07, 65M15
\end{abstract}

\section{Introduction}
\label{sec-Introduction}

Backward stochastic differential equations (BSDEs) were systematically introduced by Pardoux and Peng~\cite{PardouxPeng1990},
who proved the existence and uniqueness of nonlinear BSDEs under Lipschitz conditions.
For fully coupled forward--backward stochastic differential equations (FBSDEs),
in which the forward drift and diffusion may depend on the backward component $Y$ and the control process $Z$,
fundamental well-posedness results were subsequently established by Peng and Wu~\cite{hu1995solution,PengWu1999} and Ma and Yong~\cite{MaYong1999}.
These equations provide a probabilistic representation for a wide range of problems in stochastic control,
mathematical finance, recursive utility,
and quasilinear parabolic partial differential equations (PDEs) through nonlinear Feynman--Kac type formulas~\cite{ElKarouiQuenez1997,finance1997,peng1990stochasticmax,DuffieEpstein1992a,pengPDE}.

Despite their theoretical importance,
the numerical solution of FBSDEs remains challenging,
especially in high dimensions.
Classical grid-based methods,
such as finite difference and finite element methods~\cite{ma1994solving,BouchardTouzi2004,BouchardEkeland2004,BenderZhang2008,Zhao2016Multistep,Bender2012Monte,GobetLabart2008},
suffer from the curse of dimensionality~\cite{Bellman1957}.
Regression-based Monte Carlo methods and multistep schemes have greatly improved the numerical treatment of BSDEs and FBSDEs,
but their implementation and accuracy may still deteriorate rapidly as the dimension increases~\cite{GobetLemorWarin2005,GobetTurkedjiev2016}.

A major development in this direction is the deep BSDE method proposed by Han, Jentzen and E~\cite{HanJentzenE2018}.
By reformulating the solution of an FBSDE as a stochastic optimization problem and parameterizing the unknown control process with neural networks,
the deep BSDE method provides an effective approach for high-dimensional nonlinear PDEs and stochastic systems.
Since then, a growing literature has developed deep learning-based methods for BSDEs, FBSDEs, and related PDEs~\cite{RaissiPerdikarisKarniadakis2019,WeinanHanJentzen2017,pham2018deep_1,pham2018deep_2,Peng_FBSDE_numerical,GermainMikaelWarin2022}.
On the theoretical side, Han and Long~\cite{HanLong2020} established a posteriori error estimates and convergence results for deep BSDE methods under suitable approximation assumptions.
Related developments include extensions to FBSDEs with jumps, optimal stopping problems, multi-FBSDE formulations, and mean-field or McKean--Vlasov type systems~\cite{GnoattoOberprillerPicarelli2025,GaoEtAl2023,AnderssonAnderssonOosterlee2025,ReisingerStockingerZhang2024}.

The closest work to the present paper is the a posteriori error analysis of fully coupled McKean--Vlasov FBSDEs by Reisinger, Stockinger and Zhang~\cite{ReisingerStockingerZhang2024}.
Their framework treats a more general class of law-dependent fully coupled systems and establishes residual-type a posteriori estimates for time-discretized McKean--Vlasov FBSDEs.
Since a classical fully coupled FBSDE can be regarded as a special case without law dependence,
their results are closely related to the setting considered here;
however, they do not address the control mismatch that arises from decoupled neural parametrizations.

Previous work~\cite{Peng_FBSDE_numerical} proposed three deep learning algorithms for high-dimensional fully coupled FBSDEs.
The second algorithm therein introduced an auxiliary control process $U$ and its training loss already included a penalty on the mismatch between $U$ and $Y$.
The loss therefore consisted only of the terminal defect and the control-mismatch penalty.
While that two-term formulation proved effective in practice,
no rigorous a posteriori error analysis was available,
and the connection between the training objective and a provable error bound remained unclear.

In the present paper,
we build upon the decoupling idea and develop a comprehensive a posteriori error estimation framework that explicitly incorporates the control mismatch.
Crucially,
we augment the algorithm with an explicit pathwise residual term $\cL_R$ that penalizes deviations from the backward dynamic equation at each time step.
Together with the terminal defect $\cL_T$ and the control mismatch $\cL_U$, this yields a three‑component loss that fully mirrors the structure of the a posteriori error bound we derive.
The main contributions are as follows.

First, we establish a continuous-time stability estimate for fully coupled FBSDEs under perturbations in the drift, diffusion, generator, terminal condition, and auxiliary control input.
The estimate explicitly contains the mismatch term measuring the distance between the backward component and the auxiliary control.

Second, we transfer this stability estimate to the discrete-time setting and obtain a computable a posteriori error bound for arbitrary adapted neural approximations.
The bound depends only on the terminal defect, the pathwise residual, and the control mismatch,
all of which can be evaluated from the numerical output.
Therefore, the estimate is genuinely a posteriori in the sense that it does not require knowledge of the true solution.

Third, we show through numerical experiments that the mismatch penalty is not merely a theoretical artifact.
The first example is a high-dimensional linear--quadratic FBSDE for which an exact solution can be obtained through the associated Riccati equation.
This allows us to compare the computable indicators with the true approximation error.
The second example is a multidimensional Burgers-type FBSDE for which no closed-form reference solution is used.
In this case, the a posteriori indicators serve as diagnostic tools for comparing different training strategies and for assessing the effect of removing individual loss components.

The remainder of the paper is organized as follows.
Section~\ref{sec-preliminaries} introduces the mathematical setting of fully coupled FBSDEs and the neural network approximation framework.
Section~\ref{sec-error-analysis} presents the main continuous stability estimate and the discrete-time a posteriori error bounds.
Section~\ref{sec-proofs} contains the proofs of the main theorems.
Section~\ref{sec-numerics} reports the numerical experiments and discusses the behavior of the proposed indicators.
Section~\ref{sec-conclusion} concludes the paper and outlines possible directions for future work.

\section{Preliminaries}
\label{sec-preliminaries}

This section introduces the basic concepts needed for this article, including the forward-backward stochastic differential equation (FBSDEs), deep neural networks and universal approximation.
This preparatory discussion aims to establish the necessary theoretical foundation and computational framework for our subsequent algorithmic development.

\subsection{Mathematical Framework of Fully-Coupled FBSDEs}
\label{ssec-FBSDES}

Let $\R^n$ denote the $n$-dimensional Euclidean space equipped with the standard inner product $\langle x,y \rangle$ and the Euclidean norm $\|x\|^2=\langle x,x \rangle$ for all $x,y\in\R^n$.
Let $\R^{m \times n}$ be the Hilbert space of all $(m \times n)$-matrices endowed with the inner product
\[
    \langle A, B \rangle = \mathrm{tr}(AB^\top), \qquad \forall A, B \in \R^{m \times n},
\]
and the induced norm $\|A\|^2 = \langle A, A \rangle$.

Let $T>0$ be fixed,
and let $(\Omega, \cF, \F, \bP)$ be a complete probability space equipped with a $d$-dimensional standard Brownian motion $\{B_t\}_{0 \le t \le T}$.
We denote by $\F=\{\cF_t\}_{0\le t\le T}$ the natural filtration generated by $\{B_t\}$, augmented by all $\bP$-null sets in $\cF$.

Let $L^2(\cF_t;\R^n)$ be the space of $\cF_t$-measurable $\R^n$-valued square-integrable random variables.
We define the space $M^2(0,T;\R^n)$ as the set of all $\F$-progressively measurable processes $v:[0,T]\times\Omega\to\R^n$ satisfying $\E\bigl[\int_0^T\|v(t)\|^2\,dt\bigr] < \infty.$
This space becomes a Hilbert space when endowed with the inner product
$\langle u(\cdot), v(\cdot) \rangle_{M^2} \coloneqq \E\bigl[\int_0^T\langle u(t), v(t) \rangle_{\R^n}\,dt\bigr].$

A fully coupled forward–backward stochastic differential equation defined on $ (\Omega, \cF, \F, \bP) $ has the following general form
\begin{equation}\label{eq-FBSDE}
    \begin{cases}
        dX_t = b(t,X_t,Y_t,Z_t)\,dt + \sigma(t,X_t,Y_t,Z_t)\,dB_t, \\
        -dY_t = f(t,X_t,Y_t,Z_t)\,dt - Z_t\,dB_t, \\
        X_0=a, \qquad Y_T=g(X_T),
    \end{cases}
\end{equation}
where $X(\cdot)\in M^2(0,T;\R^n)$, $Y(\cdot)\in M^2(0,T;\R^m)$,
and $Z(\cdot)\in M^2(0,T;\R^{m \times d})$ are $\F$-adapted stochastic processes,
$a$ and $g(X_T)$ are the initial and terminal conditions, respectively.
$b$ and $\sigma$ denote the forward SDE drift coefficient and diffusion coefficient,
and $f$ denotes the generator of the backward SDE.
When $b(\cdot)$ and $\sigma(\cdot)$ do not depend on the processes $Y(\cdot),Z(\cdot)$,
the above fully coupled FBSDE can be simplified as follows
\begin{equation}\label{eq-BSDE}
    \begin{cases}
        dX_t = b(t,X_t)\,dt + \sigma(t,X_t)\,dB_t, \\
        -dY_t = f(t,X_t,Y_t,Z_t)\,dt - Z_t\,dB_t, \\
        X_0=a, \qquad Y_T=g(X_T),
    \end{cases}
\end{equation}
which is a basic uncoupled BSDE.

\begin{defn}
\label{def-solution-FBSDE}
    A triple of processes $(X(\cdot),Y(\cdot),Z(\cdot))\in M^2(0,T;\R^n\times\R^m\times \R^{m\times d}) $ is called an adapted solution of \eqref{eq-FBSDE} if it satisfies the \eqref{eq-FBSDE} $\bP$-almost surely on $[0,T]$.
\end{defn}

Let
\begin{equation*}
    u =
    \begin{pmatrix}
        x \\ y \\ z
    \end{pmatrix} \in \R^n\times\R^m\times\R^{m\times d}, \qquad A(t,u) =
    \begin{pmatrix}
        -G^\top f \\ Gb \\ G\sigma
    \end{pmatrix}(t,u),
\end{equation*}
where $G$ is given a full rank matrix satisfied $G\sigma = (G\sigma_1,\cdots,G\sigma_d)$ and
$\left\langle u^1, u^2 \right\rangle = \left\langle x^1, x^2 \right\rangle + \left\langle y^1, y^2 \right\rangle + \left\langle z^1, z^2 \right\rangle.$

\begin{assu}
\label{assu-1}
\begin{enumerate}
    \item $A(t,u)$ is uniformly Lipschitz with respect to $u$,
    $A(\cdot,u)$ is in $M^2(0,T)$ for each $u$.
    $g(x)$ on $x\in\R^n$ is uniformly Lipschitz with respect to $x$,
    and is in $L^2(\cF_T;\R^n )$.
    The Lipschitz constant denotes as $K>0$.
    \item $\phi=b,\sigma,f$ is uniformly Lipschitz with respect to $u$
    \item $\forall\,u,\bar{u}\in\R^n\times\R^m\times\R^{m\times d}$ and $x,\bar{x}$,
    there exist constants $\beta_1,\beta_2,\mu\ge 0$ with $\beta_1+\beta_2>0$ and $\mu+\beta_2>0$,
    such that:
    \begin{align*}
        \langle A(t,u)-A(t,\bar{u}), u-\bar{u} \rangle &\le -\beta_1|G(x-\bar{x})|^2-\beta_2\bigl( |G^\top(y-\bar{y})|^2+|G^\top(z-\bar{z})|^2 \bigr), \\
        \langle g(x)-g(\bar{x}), G(x-\bar{x}) \rangle &\ge \mu|G(x-\bar{x})|^2,
    \end{align*}
    where $\beta_1>0$ and $\mu>0$ if $m>n$, while $\beta_2>0$ if $n>m$.
    \end{enumerate}
\end{assu}

\begin{prop}
\label{prop-existence}
    Under the \cref{assu-1},
    the above FBSDE \eqref{eq-FBSDE} admits a unique process solution $(X(\cdot),Y(\cdot),Z(\cdot))$ in $M^2(0,T;\R^n\times\R^m\times\R^{m\times d})$.
\end{prop}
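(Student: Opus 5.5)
We follow the continuation method of Hu--Peng and Peng--Wu~\cite{hu1995solution,PengWu1999}, which is the standard route under the monotonicity condition of \cref{assu-1}. \textbf{Uniqueness} comes from a direct monotonicity computation; \textbf{existence} comes from a homotopy in a parameter $\alpha\in[0,1]$ linking \eqref{eq-FBSDE} to a trivially solvable decoupled system. We treat the case $m\le n$ with $\beta_1>0,\mu>0$ (the case with $\beta_2>0$ is symmetric, with the roles of the forward and backward equations swapped).

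\textbf{Uniqueness.} Let $(X,Y,Z)$ and $(\bar X,\bar Y,\bar Z)$ be two solutions, and write $\hat X=X-\bar X$, and similarly $\hat Y,\hat Z$. Apply It\^o's formula to $\langle G\hat X_t,\hat Y_t\rangle$ on $[0,T]$ and take expectations; the martingale terms vanish by square integrability. Because $A=(-G^\top f,Gb,G\sigma)$, this gives
\[
\E\langle G\hat X_T, g(X_T)-g(\bar X_T)\rangle=\E\int_0^T\langle A(t,u_t)-A(t,\bar u_t),u_t-\bar u_t\rangle\,dt .
\]
The left side is at least $\mu\E|G\hat X_T|^2$. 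The right side is at most
\[
-\beta_1\E\int|G\hat X|^2-\beta_2\E\int\bigl(|G^\top\hat Y|^2+|G^\top\hat Z|^2\bigr).
\]
So in the case $\beta_1>0$ we get $G\hat X\equiv0$. Since $G$ has full rank, $\hat X\equiv 0$ when $m\ge n$; in general the forward differences enter the BSDE only through $\hat X$, so the backward equation for $(\hat Y,\hat Z)$ becomes a standard Lipschitz BSDE with zero data, which forces $\hat Y=\hat Z=0$. Feeding this back into the forward equation gives $\hat X=0$ by Gronwall. The $\beta_2>0$ case runs the same way with the roles reversed.

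\textbf{Existence by continuation.} For $\alpha\in[0,1]$ and arbitrary $(\phi,\psi,\gamma)\in M^2$ and $\xi\in L^2(\cF_T)$, consider the family
\[
dX=\bigl[(1-\alpha)\beta_2(-G^\top Y)+\alpha b+\phi\bigr]dt+\bigl[(1-\alpha)\beta_2(-G^\top Z)+\alpha\sigma+\psi\bigr]dB,
\]
\[
-dY=\bigl[(1-\alpha)\beta_1GX+\alpha f+\gamma\bigr]dt-Z\,dB,\qquad Y_T=\alpha g(X_T)+(1-\alpha)GX_T+\xi .
\]
\begin{enumerate}
\item At $\alpha=0$ the system has a unique solution. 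This is the linear system treated explicitly in \cite{hu1995solution}: one solves it by decoupling, or by noting it is itself monotone with a Riccati-free structure.
\item Continuation lemma: if the system is uniquely solvable for some $\alpha_0$ and every $(\phi,\psi,\gamma,\xi)$, then it is uniquely solvable for $\alpha_0+\delta$, where $\delta>0$ depends only on $K,\beta_1,\beta_2,\mu,T$.
\item Iterating step 2 finitely many times reaches $\alpha=1$, which is \eqref{eq-FBSDE} with zero perturbations.
\end{enumerate}

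\textbf{Proof of the continuation lemma (main obstacle).} Define a map $\Phi$ on $M^2\times L^2$ by $\Phi(x,y,z)=(X,Y,Z)$. Here $(X,Y,Z)$ is the $\alpha_0$-solution whose forcing $(\phi,\psi,\gamma,\xi)$ is augmented by $\delta$ times the difference between the $\alpha=1$ and $\alpha=0$ coefficients, evaluated at the input $(x,y,z)$. The goal is to show $\Phi$ is a contraction for small $\delta$. Take two inputs, apply It\^o to $\langle G\hat X,\hat Y\rangle$, and use the monotonicity of the interpolated coefficients: this monotonicity holds uniformly in $\alpha$ because both endpoints satisfy it with the same constants. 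The result is
\[
\E\int\bigl(\beta_1|G\hat X|^2+\beta_2(|G^\top\hat Y|^2+|G^\top\hat Z|^2)\bigr)+\mu\E|G\hat X_T|^2\le C\delta\,\|\hat u_{\mathrm{in}}\|^2+C\delta\,\|\hat U\|^2 .
\]
This alone does not control every component of $\hat U$. We combine it with standard a priori estimates: the SDE estimate bounds $\hat X$ by $\hat Y,\hat Z$ and the inputs, and the BSDE estimate bounds $\hat Y,\hat Z$ by $\hat X$ and the inputs. Together these yield $\|\hat U\|^2\le C\delta\|\hat u_{\mathrm{in}}\|^2$, so $\Phi$ is a contraction once $\delta<1/(2C)$, with $C$ independent of $\alpha_0$.

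The delicate point is exactly this bookkeeping, where the degenerate directions ($\beta_2=0$ or $m\neq n$) must be closed by the Lipschitz a priori estimates. That is why the dimension-dependent sign conditions in \cref{assu-1} are imposed.
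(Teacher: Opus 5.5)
Your proposal is correct and follows essentially the same route as the paper, which offers only a one-line sketch citing Hu--Peng and Peng--Wu: their proof is this method of continuation with your $\alpha$-family (the paper's ``Picard iteration'' is the contraction step on each increment $\alpha_0\to\alpha_0+\delta$), and your uniqueness computation and contraction bookkeeping match it. Two small repairs are needed: the $\beta_1,\mu>0$ branch requires $m\ge n$ (full column rank of $G$), not $m\le n$ as you first wrote; and for general $G$ the $\alpha=0$ system is neither Riccati-free nor solvable by monotonicity, which gives only uniqueness --- existence comes from the decoupling $y_t=K_tx_t+p_t$ with $K_t'=\beta_2K_tG^\top K_t-\beta_1G$, $K_T=G$, which is globally solvable because in the singular basis of $G$ it reduces to scalar equations $k'=s(\beta_2k^2-\beta_1)$, $k(T)=s>0$, whose solutions stay positive and bounded.
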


\begin{proof}[Proof Sketch]
    The existence follows from a Picard iteration scheme,
    while uniqueness is guaranteed by the Lipschitz continuity and monotonicity conditions in \cref{assu-1}.
    The proof of the theorem is given in \cite{hu1995solution,wu1999Fully}.
\end{proof}

A foundational connection between stochastic processes and deterministic partial differential equations is established via the \textit{nonlinear Feynman-Kac formula}.
This formula provides a probabilistic representation for solutions to a broad class of parabolic PDEs through the lens of (forward-)backward stochastic differential equations.

For the more general \textit{fully coupled} FBSDE system as defined in~\eqref{eq-FBSDE},
where the drift and diffusion of the forward process depend explicitly on $(Y_t,Z_t)$,
the associated PDE becomes \textit{quasilinear}.
The solution $u(t,x)$, if sufficiently smooth,
satisfies a quasilinear parabolic PDE whose precise form can be found in~\cite{MaYong1999} or~\cite{PengWu1999}.

\subsection{Feedforward Neural Networks and Universal Approximation}
\label{ssec-NNet}

Let $d_{\text{in}},d_{\text{out}}\in\N$,
a feedforward neural network (FNN) is a hierarchical function mapping an input $x \in \R^{d_{\text{in}}}$ to an output $f_{\theta}(x) \in \R^{d_{\text{out}}}$ through $L$ layers of parameterized transformations.
Mathematically, it is a function defined by
\begin{equation}\label{eq-FNN}
    \begin{aligned}
        h_0(x)&=x, \\
        h_{\ell}(x)&=\rho(A_{\ell}h_{\ell-1}+b_{\ell}), \qquad \text{for }\ell=1,\ldots,L-1,\\
        f_{\theta}(x)&=h_L(x)=A_{L}h_{L-1}+b_L,
    \end{aligned}
\end{equation}
where $A_{\ell} \in \R^{d_\ell \times d_{\ell-1}}$ and $b_{\ell} \in \R^{d_\ell}$ are learnable parameters,
$\rho: \R \to \R$ is a component-wise activation function.

The architecture is denoted by $\cS = (d_0, d_1, \dots, d_L)$,
where $d_0=d_{\text{in}}$ and $d_L=d_{\text{out}}$ are the input and output dimensions, respectively.
The depth of the network is denoted by $D(\cS)=L$,
the width is denoted by $\|\cS\|_{\infty}=\max_{1\leq i \leq L} d_{i} $,
and the total number of the parameters of network is denoted by $|\cS| = \sum_{i=0}^{L-1} (d_i \times d_{i+1} + d_{i+1})$.
$\theta=\{A_{\ell},b_{\ell}\}_{0\leq\ell\leq L}$ are called the weight parameters,
and $\Theta:=\{\theta\}$ is denoted as the space of parameters.
Without loss of clarity, we also use $\cU(x;\theta)$ or $\cV(x;\theta)$ to represent the neural network in this following paper.

In this paper, we employ a bounded, smooth activation function, such as the logistic function or the hyperbolic tangent,
\[
\rho(x)=\frac{1}{1+e^{-x}}, \quad \text{or} \quad \rho(x)=\frac{e^x-e^{-x}}{e^x+e^{-x}},
\]
as the activation function $\rho$ for all hidden layers. The output layer uses the linear activation.

A fundamental theoretical justification for employing neural networks to solve functional equations is the universal approximation property.

\begin{thm}[Universal Approximation Theorem]
\label{thm-univ-approx}
Let $K \subset \R^{d_{\text{in}}}$ be a compact set and let $f: K \to \R^{d_{\text{out}}}$ be a continuous function.
For any $\varepsilon > 0$, there exists a feedforward neural network $\cU(\cdot;\theta)$ with architecture $\cS = (d_{\text{in}}, d_1, d_{\text{out}})$, i.e.\ a single hidden layer containing $d_1$ neurons, and a non-constant, bounded, continuous activation function $\rho$, such that
\[
\sup_{x \in K} \bigl| f(x) - \cU(x;\theta) \bigr| < \varepsilon .
\]
The number of hidden neurons $d_1$ can be chosen finite but sufficiently large.
\end{thm}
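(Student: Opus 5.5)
We argue componentwise and, for each component, by a Hahn--Banach/duality argument in the spirit of Cybenko, combined with the discriminatory (non-polynomial) property of the activation. Since $\R^{d_{\text{out}}}$-valued approximation on $K$ follows from approximating each coordinate function $f_j: K\to\R$ to within $\varepsilon/\sqrt{d_{\text{out}}}$ and stacking the resulting single-hidden-layer networks side by side, it suffices to treat $d_{\text{out}}=1$. The hidden widths simply add, and the linear output layer of \eqref{eq-FNN} is block-diagonal. Let
\[
\Sigma=\mathrm{span}\{x\mapsto \rho(\langle w,x\rangle+c): w\in\R^{d_{\text{in}}},\,c\in\R\}\subset C(K),
\]
which is exactly the set of functions realizable by architecture $(d_{\text{in}},d_1,1)$ for some finite $d_1$. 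The claim is that $\Sigma$ is dense in $C(K)$ with the sup norm.

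Suppose the closure $\overline{\Sigma}$ is a proper subspace. By Hahn--Banach and the Riesz representation theorem there is a nonzero finite signed Borel measure $\nu$ on $K$ with $\int_K \rho(\langle w,x\rangle+c)\,d\nu(x)=0$ for all $w,c$. The key step is to show that this forces $\nu=0$, i.e.\ that $\rho$ is discriminatory. For the sigmoidal case (logistic or $\tanh$, which are the activations actually used in the paper) we use the limits $\lambda\to\infty$ in $\rho(\lambda(\langle w,x\rangle+c)+\phi)$. These converge boundedly and pointwise to $\rho(+\infty)$ on $\{\langle w,x\rangle+c>0\}$, to $\rho(-\infty)$ on $\{\langle w,x\rangle+c<0\}$, and to $\rho(\phi)$ on the hyperplane. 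Dominated convergence then gives that $\nu$ annihilates indicators of half-spaces and hyperplanes, since $\rho(+\infty)\ne\rho(-\infty)$ and $\phi$ is free.

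From there, for fixed $w$, the functional $h\mapsto\int_K h(\langle w,x\rangle)\,d\nu(x)$ on bounded measurable $h:\R\to\R$ vanishes on indicators of intervals. Hence it vanishes on simple functions and, by uniform limits, on $\cos$ and $\sin$. Therefore the Fourier transform of $\nu$ vanishes identically, and so $\nu=0$, which is a contradiction.

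The main obstacle is the generality claimed in the statement: an arbitrary non-constant bounded continuous $\rho$ need not have limits at $\pm\infty$. For that case I would follow Hornik's route. Mollify to $\rho_\delta=\rho*\eta_\delta\in C^\infty$, which is still bounded, and note that $\rho_\delta(\langle w,x\rangle+c)$ is a uniform-on-$K$ limit of Riemann sums of translates of $\rho$, hence lies in $\overline{\Sigma}$. Next, differentiate in $w$ via difference quotients, which stay in $\overline{\Sigma}$. This gives $x^k\rho_\delta^{(k)}(c)$ in the closure in one-dimensional directions. Since a bounded non-constant function is not a polynomial, for each $k$ some $\rho_\delta^{(k)}(c)\neq0$, so all ridge monomials $\langle w,x\rangle^k$ lie in $\overline{\Sigma}$. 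These span all polynomials, and Stone--Weierstrass then concludes. Alternatively, one can reduce the general statement to the Leshno--Lin--Pinkus--Schocken theorem, but I would present the sigmoidal duality proof in full and the smoothing argument as the extension.
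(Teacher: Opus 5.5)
The paper gives no proof of this theorem. It is quoted in the preliminaries as classical background (the Cybenko/Hornik result, without a citation) and serves only as motivation. So there is no argument of the authors to compare against, and your proposal has to stand on its own.

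It does. The componentwise reduction, the Hahn--Banach/Riesz step, the $\lambda\to\infty$ limit with bounded convergence, and the Fourier-uniqueness conclusion are exactly Cybenko's proof. In the reduction, per-component sup-errors below $\varepsilon/\sqrt{d_{\text{out}}}$, hidden layers stacked and a block-diagonal output matrix give the Euclidean bound. This already settles the statement as literally written, which only asserts the existence of \emph{some} non-constant bounded continuous $\rho$. It also covers the logistic and $\tanh$ activations the paper actually uses.

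Two compressed steps each need one more line.
\begin{itemize}
\item \textbf{Half-spaces.} Write $H^{\pm}=\{x\in K:\pm(\langle w,x\rangle+c)>0\}$ and $\Pi$ for the hyperplane. Varying $\phi$ gives $\nu(\Pi)=0$ and $\rho(+\infty)\nu(H^+)+\rho(-\infty)\nu(H^-)=0$. For $\tanh$ this only yields $\nu(H^+)=\nu(H^-)$, and replacing $(w,c)$ by $(-w,-c)$ gives nothing new. You must also use $\nu(K)=0$. This holds because the constants $\rho(\langle 0,x\rangle+c)$ with $\rho(c)\neq 0$ lie in $\Sigma$; alternatively, let $c\to\pm\infty$ so that $H^+=K$ or $H^+=\emptyset$.
\item \textbf{General $\rho$.} Choose $\delta$ small enough that $\rho_\delta$ is non-constant, which is possible since $\rho_\delta\to\rho$ locally uniformly. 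Only then is the bounded function $\rho_\delta$ guaranteed not to be a polynomial.
\end{itemize}

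A simplification is also available. Take mixed partials $\partial_w^\alpha\rho_\delta(\langle w,x\rangle+c)$ at $w=0$; they equal $x^\alpha\rho_\delta^{(|\alpha|)}(c)$ and are uniform limits on $K$ of iterated difference quotients. This produces every monomial directly. You can then drop the spanning claim for ridge monomials $\langle w,x\rangle^k$, which you currently assert without proof.

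Finally, the mollify-and-differentiate argument is the Leshno--Lin--Pinkus--Schocken proof, not Hornik's. This affects attribution only, not correctness.
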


\begin{rem}
\label{rem-approx}
In our deep learning algorithm we shall use deeper networks to parameterize the backward processes $Y_t$, $Z_t$ and the auxiliary control $U_t$.
Combined with the stability estimates of Section~\ref{sec-error-analysis},
this provides motivation that a sufficiently well-trained network can yield an accurate solution in the sense of the a posteriori error bounds.
The quantitative relationship between the network architecture and the resulting approximation error is beyond the scope of the present paper and will be investigated separately.
\end{rem}

\section{The Main Results of Error Analysis}
\label{sec-error-analysis}

In this section we establish the core error estimates of the paper.
The presentation is organized from the general to the specific.
We first prove in \S\ref{ssec-cont-general} a continuous-time stability result for fully coupled FBSDEs under the most general perturbations:
the forward drift and diffusion may depend on an auxiliary control $\cU$ that differs from the backward component $\cY$.
The classical stability bound for identical dynamics follows immediately as a corollary.
In \S\ref{ssec-disc-general} we transfer this continuous theory to the discrete-time setting,
deriving computable a posteriori error bounds for arbitrary adapted discrete approximations.
Again, the general statement allows a mismatch between the backward variable and the control surrogate used in the forward coefficients;
the standard case without mismatch is recovered as a special case.

\subsection{Continuous Stability under General Perturbations}
\label{ssec-cont-general}

Let $(\cX,\cY,\cZ)$ and $\cU$ be adapted processes satisfying the perturbed dynamics
\begin{equation}\label{eq-perturbed-general}
    \begin{cases}
        d\cX_t = \bigl[b(t,\cX_t,\cU_t,\cZ_t)+\alpha_t\bigr]\,dt
        + \bigl[\sigma(t,\cX_t,\cU_t,\cZ_t)+\beta_t\bigr]\,dB_t,\\
        -d\cY_t = \bigl[f(t,\cX_t,\cY_t,\cZ_t)+\gamma_t\bigr]\,dt
        - \cZ_t\,dB_t,\\
        \cX_0 = a,\qquad \cY_T = g(\cX_T) - \eta,
    \end{cases}
\end{equation}
where $\alpha,\beta,\gamma$ are progressively measurable perturbations,
$\eta$ is a square-integrable terminal defect,
and $\cU$ is an auxiliary process that enters both the forward drift $b$ and the forward diffusion $\sigma$.
When $\cU\equiv\cY$,
system \eqref{eq-perturbed-general} reduces to the standard perturbed FBSDE with identical dynamics;
otherwise $\cU$ models a control surrogate that may deviate from $\cY$ in both coefficients.

\begin{thm}[General continuous stability]
\label{thm-continuous-general}
Suppose Assumption~\ref{assu-1} holds.
Let $(X,Y,Z)$ be the solution of \eqref{eq-FBSDE} and $(\cX,\cY,\cZ)$ be the solution of \eqref{eq-perturbed-general}.
Then there exists a constant $C>0$,
depending only on the constants in Assumption~\ref{assu-1} and $T$,
such that
\begin{equation}\label{eq-stability-general}
    \begin{aligned}
        \sup_{0\le t\le T}&\E\bigl[|X_t-\cX_t|^2+|Y_t-\cY_t|^2\bigr]
        + \E\Bigl[\int_0^T|Z_t-\cZ_t|^2\,dt\Bigr]\\
        &\le C\Bigl(
        \E\bigl[|\eta|^2\bigr]
        + \E\Bigl[\int_0^T(|\alpha_t|^2+|\beta_t|^2+|\gamma_t|^2)\,dt\Bigr] 
        + \E\Bigl[\int_0^T|\cY_t-\cU_t|^2\,dt\Bigr]
        \Bigr).
    \end{aligned}
\end{equation}
\end{thm}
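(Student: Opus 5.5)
The plan is to reduce the mismatch case to a standard perturbed FBSDE and then run the usual monotonicity (Hu--Peng / Peng--Wu) stability argument. First I rewrite the forward coefficients of \eqref{eq-perturbed-general} as
\[
b(t,\cX_t,\cU_t,\cZ_t)+\alpha_t = b(t,\cX_t,\cY_t,\cZ_t)+\tilde\alpha_t,\qquad \tilde\alpha_t:=\alpha_t+b(t,\cX_t,\cU_t,\cZ_t)-b(t,\cX_t,\cY_t,\cZ_t),
\]
and similarly $\tilde\beta_t:=\beta_t+\sigma(t,\cX_t,\cU_t,\cZ_t)-\sigma(t,\cX_t,\cY_t,\cZ_t)$. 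By Assumption~\ref{assu-1}(2), $|\tilde\alpha_t|^2\le 2|\alpha_t|^2+2K^2|\cY_t-\cU_t|^2$, and likewise for $\tilde\beta_t$. This makes $(\cX,\cY,\cZ)$ a solution of the standard perturbed FBSDE with identical dynamics and perturbations $(\tilde\alpha,\tilde\beta,\gamma,\eta)$. It therefore suffices to prove \eqref{eq-stability-general} in the case $\cU\equiv\cY$; the mismatch term then enters only through $\E\int_0^T(|\tilde\alpha|^2+|\tilde\beta|^2)\,dt$.

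For the case $\cU\equiv\cY$, set $\hat X=X-\cX$, $\hat Y=Y-\cY$, $\hat Z=Z-\cZ$, and apply It\^o's formula to $\langle G\hat X_t,\hat Y_t\rangle$ on $[0,T]$. Since $\hat X_0=0$, the left side equals
\[
\E\langle G\hat X_T,\hat Y_T\rangle=\E\langle G\hat X_T,g(X_T)-g(\cX_T)\rangle+\E\langle G\hat X_T,\eta\rangle .
\]
The right side is
\[
\E\int_0^T\bigl[\langle A(t,u_t)-A(t,\bar u_t),u_t-\bar u_t\rangle-\langle G\tilde\alpha_t,\hat Y_t\rangle-\langle G\tilde\beta_t,\hat Z_t\rangle+\langle G\hat X_t,\gamma_t\rangle\bigr]dt,
\]
with the signs to be checked. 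The monotonicity conditions of Assumption~\ref{assu-1}(3), together with Young's inequality with a small parameter $\varepsilon$, then give
\[
\mu\E|G\hat X_T|^2+\beta_1\E\int_0^T|G\hat X|^2\,dt+\beta_2\E\int_0^T\bigl(|G^\top\hat Y|^2+|G^\top\hat Z|^2\bigr)dt\le \varepsilon(\cdots)+C_\varepsilon\,(\text{perturbations}).
\]

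Next come the standard a priori estimates. For the forward SDE, Lipschitz continuity and Gronwall give $\sup_t\E|\hat X_t|^2\le C\,\E\int_0^T(|\hat Y|^2+|\hat Z|^2+|\tilde\alpha|^2+|\tilde\beta|^2)\,dt$. For the BSDE, It\^o's formula on $|\hat Y_t|^2$ gives $\sup_t\E|\hat Y_t|^2+\E\int_0^T|\hat Z|^2\,dt\le C\bigl(\E|\hat X_T|^2+\E|\eta|^2+\E\int_0^T(|\hat X|^2+|\gamma|^2)\,dt\bigr)$.

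These are combined according to the case structure of Assumption~\ref{assu-1}(3). When $\beta_2>0$, the duality bound controls $\E\int(|\hat Y|^2+|\hat Z|^2)$, using that $G$ has full rank, and the forward estimate then closes. When $\beta_1,\mu>0$, the duality bound controls $\hat X$, and the backward estimate closes. Absorbing the $\varepsilon$-terms finishes the argument, with $C$ depending only on $K$, $\beta_1,\beta_2,\mu$, $G$ and $T$. Substituting the bounds on $\tilde\alpha,\tilde\beta$ yields \eqref{eq-stability-general}.

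The main obstacle is the absorption step. The cross terms $\langle G\tilde\beta,\hat Z\rangle$ and $\langle G\tilde\alpha,\hat Y\rangle$ must be absorbed using only the partial coercivity available in each case. In the case $\beta_2=0$ (so $\beta_1,\mu>0$), $\hat Z$ is not directly controlled by the monotonicity inequality. It has to be recovered from the backward estimate, while $\hat X$ is controlled, and the constants have to be arranged so that this loop closes, possibly using a weighted version $e^{\lambda t}$ of the identities.
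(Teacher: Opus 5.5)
Your proposal is correct and follows the paper's proof essentially step for step: forward and backward a priori estimates, It\^o's formula for $\langle G\Delta X_t,\Delta Y_t\rangle$ combined with the monotonicity condition and Young's inequality, and the same case analysis. Absorbing the mismatch into $\tilde\alpha,\tilde\beta$ upfront is only a repackaging of the paper's splitting into $\delta b_t,\delta\sigma_t$ with $|\delta b_t|,|\delta\sigma_t|\le K|\cY_t-\cU_t|$. Two minor points. First, the case split should be made by dimension, as in the paper: the $\beta_2$-branch needs $|G^\top y|\ge c|y|$, i.e.\ $m\le n$, while for $m>n$ you must use $\beta_1,\mu>0$ even if $\beta_2>0$. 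Second, no exponential weight is needed in the $\beta_2=0$ case, because the Young parameter in front of $|\Delta Y|^2+|\Delta Z|^2$ can be chosen after the backward-estimate constant is fixed, which closes the loop directly.
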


\begin{cor}[Stability under identical dynamics]
\label{cor-continuous-identical}
If $\cU\equiv\cY$ in \eqref{eq-perturbed-general}, then the control mismatch integral vanishes and estimate \eqref{eq-stability-general} reduces to
\begin{equation*}
    \begin{aligned}
        \sup_{0\le t\le T}&\E\bigl[|X_t-\cX_t|^2+|Y_t-\cY_t|^2\bigr]
        + \E\Bigl[\int_0^T|Z_t-\cZ_t|^2\,dt\Bigr]\\
        & \le C\Bigl(
        \E\bigl[|\eta|^2\bigr]
        + \E\Bigl[\int_0^T(|\alpha_t|^2+|\beta_t|^2+|\gamma_t|^2)\,dt\Bigr]
        \Bigr).
    \end{aligned}   
\end{equation*}
\end{cor}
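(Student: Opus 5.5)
The statement to prove is Corollary~\ref{cor-continuous-identical}. It is a direct specialization of Theorem~\ref{thm-continuous-general}, so the plan is to invoke that theorem and observe that the mismatch term drops out. No new estimate is needed.

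First we check that the hypotheses carry over. Setting $\cU\equiv\cY$ in \eqref{eq-perturbed-general} gives the system
\[
d\cX_t=\bigl[b(t,\cX_t,\cY_t,\cZ_t)+\alpha_t\bigr]dt+\bigl[\sigma(t,\cX_t,\cY_t,\cZ_t)+\beta_t\bigr]dB_t,
\qquad
-d\cY_t=\bigl[f(t,\cX_t,\cY_t,\cZ_t)+\gamma_t\bigr]dt-\cZ_t\,dB_t,
\]
with the same initial datum $a$ and terminal defect $\eta$. This is a particular instance of \eqref{eq-perturbed-general}, with the admissible adapted choice $\cU:=\cY\in M^2(0,T;\R^m)$. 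Assumption~\ref{assu-1} concerns only $b,\sigma,f,g$ and is unchanged by this choice of $\cU$. Hence Theorem~\ref{thm-continuous-general} applies with the same constant $C$, which depends only on the constants of Assumption~\ref{assu-1} and on $T$.

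Next we evaluate the right-hand side of \eqref{eq-stability-general}. Since $\cY_t-\cU_t=0$ for all $t$, $\bP$-a.s., the integrand $|\cY_t-\cU_t|^2$ vanishes identically. Therefore
\[
\E\Bigl[\int_0^T|\cY_t-\cU_t|^2\,dt\Bigr]=0 .
\]
Substituting this into \eqref{eq-stability-general} yields exactly the displayed two-term bound of the corollary.

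There is no substantive obstacle. The only point to verify is that $C$ in Theorem~\ref{thm-continuous-general} does not depend on $\cU$, which holds because the theorem states that $C$ depends only on the structural constants and $T$. Consequently the reduced bound holds with that same constant $C$.
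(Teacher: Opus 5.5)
Your proposal is correct and matches the paper's proof: both substitute $\cU\equiv\cY$ into Theorem~\ref{thm-continuous-general}, note that $\E\bigl[\int_0^T|\cY_t-\cU_t|^2\,dt\bigr]=0$, and read off the two-term bound with the same constant $C$. Your extra checks, that $\cU:=\cY$ is an admissible adapted process and that $C$ does not depend on $\cU$, are sound and only make explicit what the paper leaves implicit.
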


\begin{proof}[Proof of Corollary~\ref{cor-continuous-identical}]
Setting $\cU_t = \cY_t$ for all $t$ in \eqref{eq-perturbed-general} makes the control mismatch term $\E\bigl[\int_0^T|\cY_t-\cU_t|^2\,dt\bigr]$ identically zero.
The stated bound then follows directly from Theorem~\ref{thm-continuous-general}.
\end{proof}

\begin{rem}
Theorem~\ref{thm-continuous-general} shows that the solution map of the fully coupled FBSDE is stable in the mean-square sense with respect to simultaneous perturbations in drift, diffusion, generator, terminal condition, and---when present---forward control mismatch.
The control mismatch penalty $\E\bigl[\int_0^T|\cY_t-\cU_t|^2\,dt\bigr]$ accounts for the discrepancy between the true backward component and the auxiliary control used in both $b$ and $\sigma$.
\end{rem}

\subsection{A Posteriori Error Estimates for Discrete Approximations}
\label{ssec-disc-general}

We now translate the continuous stability theory into computable error bounds for discrete-time schemes.
Let $\pi=\{0=t_0<\dots<t_N=T\}$ be a partition of $[0,T]$ with $\Delta t_i:=t_{i+1}-t_i$ and $|\pi|:=\max_i\Delta t_i$.
Given an $\cF_{t_i}$-adapted discrete trajectory $\{(\hat{X}_i,\hat{Y}_i,\hat{Z}_i,\hat{U}_i)\}_{i=0}^N$ with $\hat{X}_0=a$,
we denote by $(\bar{X},\bar{Y},\bar{Z},\bar{U})$ its piecewise constant interpolation.
Define the local residuals
\begin{align}
    R_i^X &:= \hat{X}_{i+1}-\hat{X}_i - b(t_i,\hat{X}_i,\hat{U}_i,\hat{Z}_i)\,\Delta t_i - \sigma(t_i,\hat{X}_i,\hat{U}_i,\hat{Z}_i)\,\Delta B_i,\label{eq-res-X}\\
    R_i^Y &:= \hat{Y}_{i+1}-\hat{Y}_i + f(t_i,\hat{X}_i,\hat{Y}_i,\hat{Z}_i)\,\Delta t_i - \hat{Z}_i\,\Delta B_i,\label{eq-res-Y}
\end{align}
the terminal mismatch $\eta := g(\hat{X}_N)-\hat{Y}_N$, and the computable error indicators
\begin{equation}
    \mathfrak{R}_\pi := \sum_{i=0}^{N-1}\E\Bigl[\frac{|R_i^X|^2}{\Delta t_i}+\frac{|R_i^Y|^2}{\Delta t_i}\Bigr],\qquad
    \mathfrak{P}_\pi := \sum_{i=0}^{N-1}\E\bigl[|\hat{Y}_i-\hat{U}_i|^2\bigr]\,\Delta t_i.
\end{equation}

To convert the piecewise constant approximation into a continuous-time perturbed system,
we require the following mild time-regularity of the coefficients.

\begin{assu}[$1/2$-H\"older continuity in time]\label{assu-holder}
There exists $L_H>0$ such that for all $s,t\in[0,T]$ and all $(x,y,z)$,
\[
    |b(t,x,y,z)-b(s,x,y,z)| + |\sigma(t,x,y,z)-\sigma(s,x,y,z)| + |f(t,x,y,z)-f(s,x,y,z)|
    \le L_H|t-s|^{1/2}.
\]
\end{assu}

\begin{assu}
    \label{assu-2}
The discrete approximation $\{(\hat X_i,\hat Y_i,\hat Z_i,\hat U_i)\}_{i=0}^N$ satisfies the uniform moment bound
\[
\sup_{0\le i\le N} \E\Big[|\hat X_i|^2 + |\hat Y_i|^2 + |\hat Z_i|^2 + |\hat U_i|^2\Big] \le M,
\]
where $M>0$ is a constant that may depend on the network architecture and the training outcome,
but is independent of the time step $\Delta t$.
\end{assu}

\begin{thm}[General a posteriori error estimate]
\label{thm-discrete-general}
Suppose Assumptions~\ref{assu-1}--~\ref{assu-2} hold,
and let $(X,Y,Z)$ be the solution of \eqref{eq-FBSDE}.
Then there exists $C>0$, depending only on the constants in Assumption~\ref{assu-1}, $L_H$, $M$ and $T$,
such that
\begin{equation}\label{eq-discrete-general}
    \sup_{0\le t\le T}\E\bigl[|X_t-\bar{X}_t|^2+|Y_t-\bar{Y}_t|^2\bigr]
    + \E\Bigl[\int_0^T|Z_t-\bar{Z}_t|^2\,dt\Bigr]
    \le C\bigl(\E\bigl[|\eta|^2\bigr] + \mathfrak{R}_\pi + \mathfrak{P}_\pi + |\pi|\bigr).
\end{equation}
In particular, on the grid points,
\begin{equation}\label{eq-discrete-general-grid}
    \max_{0\le i\le N}\E\bigl[|X_{t_i}-\hat{X}_i|^2+|Y_{t_i}-\hat{Y}_i|^2\bigr]
    + \E\Bigl[\int_0^T|Z_t-\bar{Z}_t|^2\,dt\Bigr]
    \le C\bigl(\E\bigl[|\eta|^2\bigr] + \mathfrak{R}_\pi + \mathfrak{P}_\pi + |\pi|\bigr).
\end{equation}
\end{thm}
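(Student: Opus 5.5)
The idea is to embed the discrete trajectory into a continuous-time perturbed system of the form \eqref{eq-perturbed-general} and then invoke Theorem~\ref{thm-continuous-general}.

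\textbf{Step 1: continuous-time interpolants.} On each interval $[t_i,t_{i+1})$ I would set
\begin{align*}
\cX_t &:= \hat X_i + b(t_i,\hat X_i,\hat U_i,\hat Z_i)(t-t_i) + \sigma(t_i,\hat X_i,\hat U_i,\hat Z_i)(B_t-B_{t_i}) + \frac{t-t_i}{\Delta t_i}R_i^X,\\
\cY_t &:= \hat Y_i - f(t_i,\hat X_i,\hat Y_i,\hat Z_i)(t-t_i) + \hat Z_i(B_t-B_{t_i}) + \frac{t-t_i}{\Delta t_i}R_i^Y.
\end{align*}
These are continuous and agree with $\hat X_{i}$, $\hat Y_{i}$ at every grid point.

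\textbf{Adaptedness caveat.} $R_i^X$ and $R_i^Y$ are only $\cF_{t_{i+1}}$-measurable, so this linear interpolation is not adapted. The remedy is the martingale representation, applied as in Han--Long. Write
\[
R_i^Y=\E[R_i^Y\mid\cF_{t_i}]+\int_{t_i}^{t_{i+1}}\zeta_s\,dB_s .
\]
The predictable part is spread linearly in time as a drift. The martingale part is absorbed into $\cZ_t:=\hat Z_i+\zeta_t$, and likewise into a diffusion perturbation for $X$. Itô isometry then gives
\[
\E\int_{t_i}^{t_{i+1}}|\zeta|^2 \le \E|R_i^Y|^2 .
\]
This is fine, since $\Delta t_i\le T$ lets it be bounded by $T\,\E|R_i^Y|^2/\Delta t_i$. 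The drift perturbation $\E[R_i^Y\mid\cF_{t_i}]/\Delta t_i$ has squared $L^2(dt)$-norm at most $\E|R_i^Y|^2/\Delta t_i$. So all residual contributions are controlled by $\mathfrak{R}_\pi$.

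Set $\cU_t:=\hat U_i$ on $[t_i,t_{i+1})$. Then $(\cX,\cY,\cZ,\cU)$ solves \eqref{eq-perturbed-general} with terminal defect $\eta$ and perturbations such as
\[
\alpha_t = b(t_i,\hat X_i,\hat U_i,\hat Z_i)-b(t,\cX_t,\cU_t,\cZ_t)+\frac{\E[R_i^X\mid\cF_{t_i}]}{\Delta t_i},
\]
and analogously for $\beta$ and $\gamma$.

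\textbf{Step 2: bounding the perturbations.} By Assumption~\ref{assu-holder} and the Lipschitz property in Assumption~\ref{assu-1}, the coefficient parts of $\alpha,\beta,\gamma$ are bounded by
\[
L_H|t-t_i|^{1/2}+K\bigl(|\cX_t-\hat X_i|+|\cZ_t-\hat Z_i|+|\cY_t-\hat Y_i|\bigr).
\]
The increments $\E|\cX_t-\hat X_i|^2$ are $O(\Delta t_i)$ times $(1+M)$, using linear growth and Assumption~\ref{assu-2}, plus a term of order $\E|R_i^X|^2$. Summing over $i$ gives
\[
\E\int_0^T(|\alpha|^2+|\beta|^2+|\gamma|^2)\,dt\le C(|\pi|+\mathfrak{R}_\pi).
\]
The $\cZ$ correction $\zeta$ enters through Lipschitz terms and is bounded the same way.

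For the mismatch term, I would write $\cY_t-\cU_t=(\hat Y_i-\hat U_i)+(\cY_t-\hat Y_i)$. This gives
\[
\E\int_0^T|\cY-\cU|^2\,dt\le 2\mathfrak{P}_\pi+C(|\pi|+\mathfrak{R}_\pi).
\]

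\textbf{Step 3: conclusion.} Theorem~\ref{thm-continuous-general} now bounds the error of $(\cX,\cY,\cZ)$ against $(X,Y,Z)$. It remains to pass to $(\bar X,\bar Y,\bar Z)$. Since $\sup_t\E|\cX_t-\bar X_t|^2\le C(|\pi|+\mathfrak R_\pi)$, and similarly for $Y$ and for $\int|\cZ-\bar Z|^2$, the triangle inequality yields \eqref{eq-discrete-general}. The grid estimate \eqref{eq-discrete-general-grid} follows because $\cX_{t_i}=\hat X_i$ and $\cY_{t_i}=\hat Y_i$.

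\textbf{Main obstacle.} I expect the hardest part to be the adaptedness and measurability issue in Step 1: building genuinely adapted continuous processes whose perturbations are controlled by $\mathfrak R_\pi$ with the $1/\Delta t_i$ weight. A secondary difficulty is that the Lipschitz feedback of $\cZ-\hat Z_i$ into the coefficients should not spoil the bounds. I would first try the conditional-expectation split together with martingale representation described above.
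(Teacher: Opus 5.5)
Your proposal is correct and is essentially the paper's proof. The paper splits each residual $R_i^X,R_i^Y$ by martingale representation into its $\cF_{t_i}$-conditional mean, spread as a drift, and a stochastic integral, which it absorbs into the diffusion perturbation $\beta$ and into $\cZ_t=\hat Z_i+\rho^{Y,i}_t$, with $\cU_t=\hat U_i$; Theorem~\ref{thm-continuous-general} and a triangle inequality back to $(\bar X,\bar Y,\bar Z)$ then finish, exactly as in your plan, and your corrected adapted construction (not the linear interpolation you first display, which you rightly note is not adapted) coincides with the paper's.
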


\begin{cor}[A posteriori estimate without control mismatch]
\label{cor-discrete-identical}
If $\hat{U}_i\equiv\hat{Y}_i$ for all $i$,
then $\mathfrak{P}_\pi=0$ and the bound \eqref{eq-discrete-general} simplifies to
\[
    \sup_{0\le t\le T}\E\bigl[|X_t-\bar{X}_t|^2+|Y_t-\bar{Y}_t|^2\bigr]
    + \E\Bigl[\int_0^T|Z_t-\bar{Z}_t|^2\,dt\Bigr]
    \le C\bigl(\E\bigl[|\eta|^2\bigr] + \mathfrak{R}_\pi + |\pi|\bigr).
\]
\end{cor}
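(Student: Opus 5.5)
This corollary follows directly from Theorem~\ref{thm-discrete-general}, so the plan is a short specialization argument rather than a new estimate.

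\textbf{Step 1: the mismatch indicator vanishes.} Under the hypothesis $\hat U_i\equiv\hat Y_i$ for every $0\le i\le N$, each summand of $\mathfrak{P}_\pi$ contains the factor $\E\bigl[|\hat Y_i-\hat U_i|^2\bigr]=0$. Hence $\mathfrak{P}_\pi=0$ exactly.

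\textbf{Step 2: the hypotheses of Theorem~\ref{thm-discrete-general} still hold.} We check that the specialized trajectory is an admissible input.

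\begin{itemize}
\item The trajectory $\{(\hat X_i,\hat Y_i,\hat Z_i,\hat Y_i)\}$ is $\cF_{t_i}$-adapted.
\item It satisfies Assumption~\ref{assu-2}. The $|\hat U_i|^2$ moment is simply the $|\hat Y_i|^2$ moment, so the constant is at most $2M$. This only changes $C$ by a factor depending on $M$.
\item Assumptions~\ref{assu-1} and~\ref{assu-holder} concern only the coefficients, so they are unaffected.
\item The residuals $R_i^X$ in \eqref{eq-res-X} become the standard Euler residuals with $\hat Y_i$ in place of $\hat U_i$. The residuals $R_i^Y$ and the defect $\eta$ are unchanged. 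Thus $\mathfrak{R}_\pi$ is the usual residual indicator.
\end{itemize}

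\textbf{Step 3: conclude.} Apply \eqref{eq-discrete-general} and substitute $\mathfrak{P}_\pi=0$. This gives the stated bound with the same constant $C$. The grid-point version \eqref{eq-discrete-general-grid} specializes in the same way if it is wanted.

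\textbf{Expected obstacle.} There is essentially none. The only point needing care is that the constant $C$ from Theorem~\ref{thm-discrete-general} must not depend on the size of the mismatch, and it does not: it depends only on the constants in Assumption~\ref{assu-1}, $L_H$, $M$ and $T$. The bound therefore transfers verbatim.
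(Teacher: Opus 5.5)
Your proposal is correct and matches the paper's proof: with $\hat U_i=\hat Y_i$ the indicator $\mathfrak{P}_\pi$ vanishes by definition, and substituting this into \eqref{eq-discrete-general} from Theorem~\ref{thm-discrete-general} gives the stated bound. Your re-check of Assumption~\ref{assu-2} (the $2M$ remark) is harmless but unnecessary, since the corollary inherits that assumption for the tuple $(\hat X_i,\hat Y_i,\hat Z_i,\hat U_i)$ directly from the theorem, so the same constant $C$ applies unchanged.
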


\begin{proof}[Proof of Corollary~\ref{cor-discrete-identical}]
If $\hat{U}_i = \hat{Y}_i$ for every $i$,
then by definition $\mathfrak{P}_\pi = \sum_{i=0}^{N-1}\E\bigl[|\hat{Y}_i-\hat{U}_i|^2\bigr]\Delta t_i = 0$.
Substituting this into \eqref{eq-discrete-general} yields the claimed estimate.
\end{proof}

\begin{cor}[Euler-type schemes]
\label{cor-euler-general}
Assume the discrete trajectory is generated by the forward Euler updates
\begin{align}
    \hat{X}_{i+1} &= \hat{X}_i + b(t_i,\hat{X}_i,\hat{U}_i,\hat{Z}_i)\,\Delta t_i + \sigma(t_i,\hat{X}_i,\hat{U}_i,\hat{Z}_i)\,\Delta B_i, \label{eq-euler-forward}\\
    \hat{Y}_{i+1} &= \hat{Y}_i - f(t_i,\hat{X}_i,\hat{Y}_i,\hat{Z}_i)\,\Delta t_i + \hat{Z}_i\,\Delta B_i + \varepsilon_i^Y, \label{eq-euler-backward}
\end{align}
where $\varepsilon_i^Y$ is an $\cF_{t_{i+1}}$-measurable local defect.
Then $\mathfrak{R}_\pi = \sum_{i=0}^{N-1}\E\bigl[|\varepsilon_i^Y|^2\bigr]/\Delta t_i$,
and \eqref{eq-discrete-general} becomes
\begin{equation*}
    \begin{aligned}
        \sup_{0\le t\le T}\E\bigl[|X_t-\bar{X}_t|^2+|Y_t-\bar{Y}_t|^2\bigr]
        & + \E\Bigl[\int_0^T|Z_t-\bar{Z}_t|^2\,dt\Bigr]\\
        & \le C\Bigl(\E\bigl[|\eta|^2\bigr] + \sum_{i=0}^{N-1}\frac{\E\bigl[|\varepsilon_i^Y|^2\bigr]}{\Delta t_i} + \mathfrak{P}_\pi + |\pi|\Bigr).
    \end{aligned}
\end{equation*}
If in addition $\varepsilon_i^Y=0$ and $\hat{U}_i=\hat{Y}_i$,
we recover the classical $O(|\pi|)$ consistency bound.
\end{cor}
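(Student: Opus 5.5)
This corollary follows from Theorem~\ref{thm-discrete-general} by identifying the residuals. The plan is to compute $R_i^X$ and $R_i^Y$ from \eqref{eq-res-X}--\eqref{eq-res-Y} for the Euler trajectory, substitute them into $\mathfrak{R}_\pi$, and read off the bound. Nothing new on the analytic side is needed; we only verify that Assumptions~\ref{assu-1}--\ref{assu-2} are inherited.

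\textbf{Step 1 (residuals).} Substitute the forward update \eqref{eq-euler-forward} into \eqref{eq-res-X}. The increments cancel exactly, so $R_i^X=0$ for every $i$; the forward scheme uses the same arguments $(t_i,\hat X_i,\hat U_i,\hat Z_i)$ as the residual. Substitute \eqref{eq-euler-backward} into \eqref{eq-res-Y}. This gives
\[
R_i^Y=\bigl(-f(t_i,\hat X_i,\hat Y_i,\hat Z_i)\Delta t_i+\hat Z_i\Delta B_i+\varepsilon_i^Y\bigr)+f(t_i,\hat X_i,\hat Y_i,\hat Z_i)\Delta t_i-\hat Z_i\Delta B_i=\varepsilon_i^Y .
\]
Hence $\mathfrak{R}_\pi=\sum_i\E[|\varepsilon_i^Y|^2]/\Delta t_i$. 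Since $\varepsilon_i^Y$ is $\cF_{t_{i+1}}$-measurable, the trajectory stays adapted, as Theorem~\ref{thm-discrete-general} requires. We also need Assumption~\ref{assu-2}. It is either taken as a hypothesis carried over from the theorem, or deduced from the Lipschitz growth of $b,\sigma,f$ together with finiteness of $\sum_i\E|\varepsilon_i^Y|^2$ via a discrete Gronwall argument. I would simply take it as part of the standing hypotheses.

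\textbf{Step 2 (insert into the theorem).} Plugging this $\mathfrak{R}_\pi$ into \eqref{eq-discrete-general} gives the displayed estimate, with the same constant $C$.

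\textbf{Step 3 (special case).} If $\varepsilon_i^Y=0$ and $\hat U_i=\hat Y_i$, then $\mathfrak{R}_\pi=0$, and $\mathfrak{P}_\pi=0$ as in Corollary~\ref{cor-discrete-identical}. The bound becomes $C(\E|\eta|^2+|\pi|)$. The $O(|\pi|)$ consistency claim is read with $\eta$ controlled as well, for instance when the scheme is initialized so that the terminal defect vanishes or is $O(|\pi|)$.

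The only delicate point is this interpretation of the classical bound. A forward-only Euler recursion from an arbitrary $\hat Y_0$ does not force $\eta=0$, so I would state explicitly that the $O(|\pi|)$ rate is conditional on the terminal defect being of that order. With that hypothesis made explicit, the remaining steps are direct algebra.
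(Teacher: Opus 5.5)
Your proof is correct and follows the paper's argument: direct substitution gives $R_i^X=0$ and $R_i^Y=\varepsilon_i^Y$, you insert this into Theorem~\ref{thm-discrete-general}, and in the special case $\mathfrak{R}_\pi=\mathfrak{P}_\pi=0$. Your observation that the remaining bound is $C(\E[|\eta|^2]+|\pi|)$, so the $O(|\pi|)$ rate also requires the terminal defect to be of order $|\pi|$, is a correct refinement that the paper's proof passes over.
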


\begin{proof}[Proof of Corollary~\ref{cor-euler-general}]
Under the forward Euler update \eqref{eq-euler-forward},
the forward residual satisfies $R_i^X = 0$ for all $i$.
From \eqref{eq-euler-backward}, we obtain $R_i^Y = \varepsilon_i^Y$.
Consequently,
\[
\mathfrak{R}_\pi = \sum_{i=0}^{N-1}\E\Bigl[\frac{|R_i^X|^2}{\Delta t_i} + \frac{|R_i^Y|^2}{\Delta t_i}\Bigr]
= \sum_{i=0}^{N-1}\frac{\E\bigl[|\varepsilon_i^Y|^2\bigr]}{\Delta t_i}.
\]
Inserting this expression for $\mathfrak{R}_\pi$ into Theorem~\ref{thm-discrete-general} yields the first bound.
If additionally $\varepsilon_i^Y = 0$ and $\hat{U}_i = \hat{Y}_i$, then $\mathfrak{R}_\pi = 0$ and $\mathfrak{P}_\pi = 0$, giving the $O(|\pi|)$ estimate.
\end{proof}

\begin{rem}
The indicators $\eta$, $\mathfrak{R}_\pi$, and $\mathfrak{P}_\pi$ are computable directly from the discrete output without knowledge of the true solution.
They serve as practical a posteriori criteria for model selection, adaptive refinement, and stopping in deep learning algorithms for fully coupled FBSDEs.
The mismatch penalty $\mathfrak{P}_\pi$ is particularly relevant when separate network parametrizations are used for the backward component $\hat{Y}$ and the control surrogate $\hat{U}$ that enters both $b$ and $\sigma$.
If one further sets $\mathfrak R_\pi=0$ (i.e., discards the pathwise residual penalty) while keeping $\hat{U}$ decoupled from $\hat{Y}$,
the scheme reduces to the second case of~\cite{Peng_FBSDE_numerical}.
In that case,
the general estimate~\eqref{eq-discrete-general} still formally applies,
but the residual indicator $\mathfrak R_\pi$
may become uninformative because the residual is not explicitly controlled during training.
This explains the lack of rigorous error guarantees for that earlier algorithm.
\end{rem}

\begin{rem}
The constant $C$ in Theorems~\ref{thm-continuous-general} and~\ref{thm-discrete-general} originates solely from the stability analysis of the FBSDE and the time discretization.
It therefore controls the error under the assumption that the discrete approximation $\{(\hat{X}_i,\hat{Y}_i,\hat{Z}_i,\hat{U}_i)\}$ is already available.
When such an approximation is generated by a deep learning algorithm, additional sources of error---primarily the approximation error due to finite network capacity and the statistical error from finite-sample Monte Carlo estimates---are not accounted for in $C$.
Consequently, the a posteriori bound should be interpreted as a computable \emph{error indicator} that faithfully reflects the quality of the discrete trajectory, rather than a certified numerical upper bound in the presence of those unmodeled errors.
Its practical value lies in guiding the training process and comparing different approximations without reference to the true solution, which is exactly how we employ it in Section~\ref{sec-numerics}.
\end{rem}

\section{Proofs of the Main Theorems}
\label{sec-proofs}

In this section we provide the complete and detailed proofs of the two principal theorems stated in Section~\ref{sec-error-analysis}:
the general continuous stability estimate (Theorem~\ref{thm-continuous-general}) and the general a posteriori error bound for discrete approximations (Theorem~\ref{thm-discrete-general}).

\subsection{Proof of Theorem~\ref{thm-continuous-general}}
\label{ssec-proof-continuous-general}

\begin{proof}
Let
\[
    \Delta X_t:=X_t-\cX_t,\qquad
    \Delta Y_t:=Y_t-\cY_t,\qquad
    \Delta Z_t:=Z_t-\cZ_t.
\]
Also define
\[
    \Delta U_t:=Y_t-\cU_t.
\]
Then, by \eqref{eq-FBSDE} and \eqref{eq-perturbed-general},
\begin{equation}\label{eq-delta-system}
    \begin{cases}
        d\Delta X_t
        =
        \bigl(
        b(t,X_t,Y_t,Z_t)-b(t,\cX_t,\cU_t,\cZ_t)-\alpha_t
        \bigr)\,dt\\
        \qquad\qquad
        +
        \bigl(
        \sigma(t,X_t,Y_t,Z_t)-\sigma(t,\cX_t,\cU_t,\cZ_t)-\beta_t
        \bigr)\,dB_t,\\
        -d\Delta Y_t
        =
        \bigl(
        f(t,X_t,Y_t,Z_t)-f(t,\cX_t,\cY_t,\cZ_t)-\gamma_t
        \bigr)\,dt
        -\Delta Z_t\,dB_t,\\
        \Delta X_0=0,\qquad
        \Delta Y_T=g(X_T)-g(\cX_T)+\eta.
    \end{cases}
\end{equation}

For convenience, write
\begin{align*}
    \Delta b_t &:= b(t,X_t,Y_t,Z_t)-b(t,\cX_t,\cY_t,\cZ_t),\\
    \delta b_t &:= b(t,\cX_t,\cY_t,\cZ_t)-b(t,\cX_t,\cU_t,\cZ_t),\\
    \Delta\sigma_t &:= \sigma(t,X_t,Y_t,Z_t)-\sigma(t,\cX_t,\cY_t,\cZ_t),\\
    \delta\sigma_t &:= \sigma(t,\cX_t,\cY_t,\cZ_t)-\sigma(t,\cX_t,\cU_t,\cZ_t),\\
    \Delta f_t &:= f(t,X_t,Y_t,Z_t)-f(t,\cX_t,\cY_t,\cZ_t).
\end{align*}
Then \eqref{eq-delta-system} becomes
\begin{equation}\label{eq-DeltaX-split}
    \begin{cases}
        d\Delta X_t = \bigl(\Delta b_t + \delta b_t - \alpha_t\bigr)\,dt + \bigl(\Delta\sigma_t + \delta\sigma_t - \beta_t\bigr)\,dB_t,\\
        d\Delta Y_t = -\bigl(\Delta f_t - \gamma_t\bigr)\,dt + \Delta Z_t\,dB_t,\\
        \Delta X_0=0,\qquad \Delta Y_T=g(X_T)-g(\cX_T)+\eta.
    \end{cases}
\end{equation}

\medskip
\noindent
\textbf{Forward estimate for $\Delta X$.}
Applying It\^o's formula to $|\Delta X_t|^2$ and using \eqref{eq-DeltaX-split},
\[
    d|\Delta X_t|^2 = 2\langle \Delta X_t, \Delta b_t+\delta b_t-\alpha_t\rangle dt + 2\langle \Delta X_t, \Delta\sigma_t+\delta\sigma_t-\beta_t\rangle dB_t + |\Delta\sigma_t+\delta\sigma_t-\beta_t|^2 dt.
\]
Taking expectations and integrating from $0$ to $t$,
\[
    \E|\Delta X_t|^2 = \E\int_0^t \bigl( 2\langle \Delta X_s, \Delta b_s+\delta b_s-\alpha_s\rangle + |\Delta\sigma_s+\delta\sigma_s-\beta_s|^2 \bigr) ds.
\]
Using the Lipschitz properties of $b$ and $\sigma$, we obtain the pointwise bounds
\begin{align*}
    2\langle \Delta X, \Delta b+\delta b-\alpha\rangle &\le C_1|\Delta X|^2 + |\Delta Y|^2 + |\Delta Z|^2 + |\alpha|^2 + |\cY-\cU|^2,\\
    |\Delta\sigma+\delta\sigma-\beta|^2 &\le C_2\bigl(|\Delta X|^2+|\Delta Y|^2+|\Delta Z|^2+|\beta|^2+|\cY-\cU|^2\bigr).
\end{align*}
Thus
\[
    \E|\Delta X_t|^2 \le C\int_0^t \E|\Delta X_s|^2 ds + C\int_0^T \E\bigl(|\Delta Y_s|^2+|\Delta Z_s|^2\bigr) ds + C\int_0^T \E\bigl(|\alpha_s|^2+|\beta_s|^2+|\cY_s-\cU_s|^2\bigr) ds.
\]
Gronwall's inequality then yields
\begin{equation}\label{eq-estimate-X-sup}
    \sup_{0\le t\le T}\E|\Delta X_t|^2 \le C\Bigl( \E\int_0^T (|\Delta Y_t|^2+|\Delta Z_t|^2) dt + \E\int_0^T (|\alpha_t|^2+|\beta_t|^2+|\cY_t-\cU_t|^2) dt \Bigr).
\end{equation}

\medskip
\noindent
\textbf{Backward estimate for $(\Delta Y,\Delta Z)$.}
Apply It\^o's formula to $|\Delta Y_t|^2$:
\[
    d|\Delta Y_t|^2 = -2\langle \Delta Y_t, \Delta f_t-\gamma_t\rangle dt + 2\langle \Delta Y_t, \Delta Z_t\rangle dB_t + |\Delta Z_t|^2 dt.
\]
Integrating from $t$ to $T$ and taking expectation,
\[
    \E|\Delta Y_t|^2 + \E\int_t^T |\Delta Z_s|^2 ds = \E|\Delta Y_T|^2 + 2\E\int_t^T \langle \Delta Y_s, \Delta f_s-\gamma_s\rangle ds.
\]
The Lipschitz continuity of $f$ gives $|\Delta f_s|\le K(|\Delta X_s|+|\Delta Y_s|+|\Delta Z_s|)$. Then
\[
    2\langle \Delta Y_s, \Delta f_s-\gamma_s\rangle \le K^2|\Delta X_s|^2 + (1+2K+2K^2)|\Delta Y_s|^2 + \frac{1}{2}|\Delta Z_s|^2 + |\gamma_s|^2.
\]
Hence
\[
    \E|\Delta Y_t|^2 + \E\int_t^T |\Delta Z_s|^2 ds \le \E|\Delta Y_T|^2 + C\E\int_t^T \bigl(|\Delta X_s|^2+|\Delta Y_s|^2+|\gamma_s|^2\bigr) ds + \frac{1}{2}\E\int_t^T |\Delta Z_s|^2 ds.
\]
Moving the $\frac{1}{2}\int|\Delta Z|^2$ term to the left,
\begin{align*}
    \E|\Delta Y_t|^2 + \frac{1}{2}\E\int_t^T |\Delta Z_s|^2 ds &\le \E|\Delta Y_T|^2 + C\E\int_t^T \bigl(|\Delta X_s|^2+|\Delta Y_s|^2+|\gamma_s|^2\bigr) ds\\
    &\le\E|\Delta Y_T|^2 + C\E\int_0^T \bigl(|\Delta X_t|^2+|\gamma_t|^2\bigr) dt + C\E\int_t^T |\Delta Y_s|^2 ds.
\end{align*}
Since $g$ is Lipschitz,
$\E|\Delta Y_T|^2 \le C(\E|\Delta X_T|^2 + \E|\eta|^2)$.
Applying Gronwall's inequality to the function $\phi(t)=\E|\Delta Y_t|^2$ (noting that the integral of $|\Delta Y|^2$ can be absorbed) yields
\begin{equation}\label{eq-estimate-Y-sup}
    \sup_{0\le t\le T}\E|\Delta Y_t|^2 \le C\Bigl( \E\int_0^T (|\Delta X_t|^2+|\gamma_t|^2) dt + \E|\Delta X_T|^2 + \E|\eta|^2 \Bigr).
\end{equation}
Together with the bound for $\int|\Delta Z|^2$ obtained from the same inequality,
we get
\begin{equation}\label{eq-estimate-YZ-sup}
    \sup_{0\le t\le T}\E|\Delta Y_t|^2 + \E\int_0^T |\Delta Z_t|^2 dt \le C\Bigl( \E\int_0^T (|\Delta X_t|^2+|\gamma_t|^2) dt + \E|\Delta X_T|^2 + \E|\eta|^2 \Bigr).
\end{equation}

\medskip
For $u=(x,y,z)$, recall
\[
    A(t,u):=
    \begin{pmatrix}
        -G^\top f(t,x,y,z)\\Gb(t,x,y,z)\\G\sigma(t,x,y,z)
    \end{pmatrix}.
\]
Applying It\^o's formula to $\langle G\Delta X_t,\Delta Y_t\rangle$ gives
\begin{align*}
    d\langle G\Delta X_t,\Delta Y_t\rangle &= \langle G(\Delta b_t+\delta b_t-\alpha_t),\Delta Y_t\rangle dt + \langle G(\Delta\sigma_t+\delta\sigma_t-\beta_t),\Delta Y_t\rangle dB_t \\
    &\quad + \langle G\Delta X_t,-(\Delta f_t-\gamma_t)\rangle dt + \langle G\Delta X_t,\Delta Z_t\rangle dB_t \\
    &\quad + \langle G(\Delta\sigma_t+\delta\sigma_t-\beta_t),\Delta Z_t\rangle dt.
\end{align*}
Integrating from $0$ to $T$ and taking expectation (the stochastic integrals are martingales and vanish),
we obtain
\begin{equation}\label{eq-energy-compact}
    \begin{aligned}
        \E\bigl[\langle G\Delta X_T,\Delta Y_T\rangle\bigr]
        &= \E\Bigl[\int_0^T \langle A(t,u_t)-A(t,\bar u_t),u_t-\bar u_t\rangle dt\Bigr] \\
        &\quad + \E\Bigl[\int_0^T \langle G\delta b_t,\Delta Y_t\rangle dt\Bigr]
        + \E\Bigl[\int_0^T \langle G\delta\sigma_t,\Delta Z_t\rangle dt\Bigr] \\
        &\quad + \E\Bigl[\int_0^T \bigl( \langle G\Delta X_t,\gamma_t\rangle - \langle G\alpha_t,\Delta Y_t\rangle - \langle G\beta_t,\Delta Z_t\rangle \bigr)dt\Bigr],
    \end{aligned}
\end{equation}
where
\[
    u_t:=(X_t,Y_t,Z_t),\qquad \bar u_t:=(\cX_t,\cY_t,\cZ_t),
\]
and
\[
    \langle A(t,u_t)-A(t,\bar u_t),u_t-\bar u_t\rangle
    = -\langle G\Delta X_t,\Delta f_t\rangle + \langle G\Delta b_t,\Delta Y_t\rangle + \langle G\Delta\sigma_t,\Delta Z_t\rangle.
\]

\medskip
By the Lipschitz continuity of $b$ and $\sigma$ (Assumption~\ref{assu-1}),
\begin{equation}\label{eq-deltab-est}
    |\delta b_t|
    = |b(t,\cX_t,\cY_t,\cZ_t)-b(t,\cX_t,\cU_t,\cZ_t)|
    \le K|\cY_t-\cU_t|,
\end{equation}
\begin{equation}\label{eq-deltasigma-est}
    |\delta\sigma_t|
    = |\sigma(t,\cX_t,\cY_t,\cZ_t)-\sigma(t,\cX_t,\cU_t,\cZ_t)|
    \le K|\cY_t-\cU_t|.
\end{equation}
Moreover, by the monotonicity condition in Assumption~\ref{assu-1},
\begin{equation}\label{eq-monotonicity}
    \langle A(t,u_t)-A(t,\bar u_t),u_t-\bar u_t\rangle
    \le -\beta_1 |G\Delta X_t|^2
    - \beta_2\bigl( |G^\top\Delta Y_t|^2 + |G^\top\Delta Z_t|^2 \bigr).
\end{equation}

We will repeatedly use Young's inequality $2ab \le \varepsilon a^2 + \varepsilon^{-1}b^2$ with different small parameters $\varepsilon_1$ (for terms involving $|G\Delta X|^2$) and $\varepsilon_2$ (for $|\Delta Y|^2,|\Delta Z|^2$).
The precise values of $\varepsilon_1,\varepsilon_2$ will be chosen later sufficiently small to absorb certain terms.
Constants $C_{\varepsilon}$ depend on $\varepsilon$ and may change from line to line.

From \eqref{eq-deltab-est} and \eqref{eq-deltasigma-est} we obtain
\begin{align*}
    |\langle G\delta b_t,\Delta Y_t\rangle| &\le \varepsilon_2 |\Delta Y_t|^2 + C_{\varepsilon_2} |\cY_t-\cU_t|^2,\\
    |\langle G\delta\sigma_t,\Delta Z_t\rangle| &\le \varepsilon_2 |\Delta Z_t|^2 + C_{\varepsilon_2} |\cY_t-\cU_t|^2.
\end{align*}
Hence
\begin{equation}\label{eq-error-terms}
    |\langle G\delta b_t,\Delta Y_t\rangle|+|\langle G\delta\sigma_t,\Delta Z_t\rangle|
    \le \varepsilon_2(|\Delta Y_t|^2+|\Delta Z_t|^2) + C_{\varepsilon_2} |\cY_t-\cU_t|^2.
\end{equation}
Similarly,
\begin{align*}
    |\langle G\alpha_t,\Delta Y_t\rangle| &\le \varepsilon_2 |\Delta Y_t|^2 + C_{\varepsilon_2}|\alpha_t|^2,\\
    |\langle G\beta_t,\Delta Z_t\rangle| &\le \varepsilon_2 |\Delta Z_t|^2 + C_{\varepsilon_2}|\beta_t|^2,\\
    |\langle G\Delta X_t,\gamma_t\rangle| &\le \varepsilon_1 |G\Delta X_t|^2 + C_{\varepsilon_1}|\gamma_t|^2.
\end{align*}
Thus
\begin{equation}\label{eq-perturbation-terms}
    \begin{aligned}
        &\bigl| \langle G\alpha_t,\Delta Y_t\rangle - \langle G\Delta X_t,\gamma_t\rangle + \langle G\beta_t,\Delta Z_t\rangle \bigr| \\
        &\qquad\le \varepsilon_1 |G\Delta X_t|^2 + \varepsilon_2(|\Delta Y_t|^2+|\Delta Z_t|^2)
        + C_{\varepsilon_1}|\gamma_t|^2 + C_{\varepsilon_2}(|\alpha_t|^2+|\beta_t|^2).
    \end{aligned}
\end{equation}

Inserting the monotonicity bound \eqref{eq-monotonicity} and the estimates \eqref{eq-error-terms}, \eqref{eq-perturbation-terms} into \eqref{eq-energy-compact} yields
\begin{equation}\label{eq-estimate-base}
    \begin{aligned}
        &\E\langle G\Delta X_T,\Delta Y_T\rangle + \beta_1\E\int_0^T |G\Delta X_t|^2 dt + \beta_2\E\int_0^T \bigl(|G^\top\Delta Y_t|^2+|G^\top\Delta Z_t|^2\bigr) dt \\
        &\le \E\int_0^T \bigl( \varepsilon_1|G\Delta X_t|^2 + 2\varepsilon_2(|\Delta Y_t|^2+|\Delta Z_t|^2) \bigr) dt \\
        &\quad + \E\int_0^T \Bigl( C_{\varepsilon_2}(|\alpha_t|^2+|\beta_t|^2+|\cY_t-\cU_t|^2) + C_{\varepsilon_1}|\gamma_t|^2 \Bigr) dt.
    \end{aligned}
\end{equation}
By Assumption~\ref{assu-1},
the terminal condition satisfies
\[
    \langle G\Delta X_T, g(X_T)-g(\cX_T)\rangle \ge \mu |G\Delta X_T|^2.
\]
Using $\Delta Y_T = g(X_T)-g(\cX_T)+\eta$, we have
\[
    \langle G\Delta X_T,\Delta Y_T\rangle = \langle G\Delta X_T, g(X_T)-g(\cX_T)\rangle + \langle G\Delta X_T,\eta\rangle
    \ge \mu |G\Delta X_T|^2 - |\langle G\Delta X_T,\eta\rangle|.
\]
Substituting this into \eqref{eq-estimate-base} and rearranging, we obtain
\begin{equation}\label{eq-estimate-total}
    \begin{aligned}
        &\mu\E|G\Delta X_T|^2 + \beta_1\E\int_0^T |G\Delta X_t|^2 dt + \beta_2\E\int_0^T \bigl(|G^\top\Delta Y_t|^2+|G^\top\Delta Z_t|^2\bigr) dt \\
        &\le \E |\langle G\Delta X_T,\eta\rangle|
        + \E\int_0^T \bigl( \varepsilon_1|G\Delta X_t|^2 + 2\varepsilon_2(|\Delta Y_t|^2+|\Delta Z_t|^2) \bigr) dt \\
        &\quad + \E\int_0^T \Bigl( C_{\varepsilon_2}(|\alpha_t|^2+|\beta_t|^2+|\cY_t-\cU_t|^2) + C_{\varepsilon_1}|\gamma_t|^2 \Bigr) dt.
    \end{aligned}
\end{equation}

\medskip
\noindent
\textbf{Case 1: $m>n$ (the $X$-coercive regime).}
Here $G$ has full column rank, so there exists $c_G>0$ such that $|Gx|\ge c_G|x|$ for all $x\in\R^n$.
In particular, $|G\Delta X|^2\ge c_G^2|\Delta X|^2$ and $|G\Delta X_T|^2\ge c_G^2|\Delta X_T|^2$.

Young's inequality gives
\[
    |\langle G\Delta X_T,\eta\rangle| \le \frac{\mu}{2}|G\Delta X_T|^2 + \frac{1}{2\mu}|\eta|^2.
\]
Taking expectations, substituting it into \eqref{eq-estimate-base} and rearranging terms,
we obtain
\begin{equation}\label{eq-estimate-total-1}
    \begin{aligned}
        &\frac{\mu}{2}\E|G\Delta X_T|^2 + \beta_1\E\int_0^T |G\Delta X_t|^2 dt + \beta_2\E\int_0^T \bigl(|G^\top\Delta Y_t|^2+|G^\top\Delta Z_t|^2\bigr) dt \\
        &\le \frac{1}{2\mu}\E|\eta|^2 
        + \E\int_0^T \bigl( \varepsilon_1|G\Delta X_t|^2 + 2\varepsilon_2(|\Delta Y_t|^2+|\Delta Z_t|^2) \bigr) dt \\
        &\quad + \E\int_0^T \Bigl( C_{\varepsilon_2}(|\alpha_t|^2+|\beta_t|^2+|\cY_t-\cU_t|^2) + C_{\varepsilon_1}|\gamma_t|^2 \Bigr) dt.
    \end{aligned}
\end{equation}

Choose $\varepsilon_1 = \beta_1/2$ in \eqref{eq-estimate-total-1}.
Then the terms $\varepsilon_1\int|G\Delta X|^2$ on the right can be absorbed into the left-hand side $\beta_1\int|G\Delta X|^2$.
Dropping the non‑negative $\beta_2$ term, we obtain
\begin{align*}
    \frac{\mu}{2}\E|G\Delta X_T|^2 + \frac{\beta_1}{2}\E\int_0^T |G\Delta X_t|^2 dt &\le C_{\varepsilon_1}\E|\eta|^2 + 2\varepsilon_2\E\int_0^T (|\Delta Y_t|^2+|\Delta Z_t|^2) dt \\
    &+ C_{\varepsilon_2}\E\int_0^T (|\alpha_t|^2+|\beta_t|^2+|\cY_t-\cU_t|^2) dt + C_{\varepsilon_1}\E\int_0^T |\gamma_t|^2 dt.
\end{align*}
The left-hand side dominates $\frac{\mu c_G^2}{2}\E|\Delta X_T|^2 + \frac{\beta_1 c_G^2}{2}\E\int_0^T |\Delta X_t|^2 dt$ from the full column rank property.
Consequently,
\begin{align*}
    \E\int_0^T |\Delta X_t|^2 dt + \E|\Delta X_T|^2 & \le C\Bigl( \E|\eta|^2 + \varepsilon_2\E\int_0^T (|\Delta Y_t|^2+|\Delta Z_t|^2) dt \\
    &\qquad+ \E\int_0^T (|\alpha_t|^2+|\beta_t|^2+|\gamma_t|^2+|\cY_t-\cU_t|^2) dt \Bigr).
\end{align*}
Note that \eqref{eq-estimate-YZ-sup} holds,
we have
\begin{align*}
    & \sup_{0 \le t \le T} \E \bigl[|\Delta Y_t|^2\bigr] + \E\Bigl[\int_0^T |\Delta Z_t|^2 dt\Bigr] \\
    \le &C \Bigl(\E \Bigl[\int_0^T |\Delta X_t|^2dt\Bigr] + \E \bigl[|\Delta X_T|^2\bigr] + \E \Bigl[\int_0^T|\gamma_t|^2dt\Bigr] + \E \bigl[|\eta|^2\bigr]\Bigr) \\
    \le &C\Bigl(\E \Bigl[\int_0^T\varepsilon_2\bigl(|\Delta Y_t|^2 + |\Delta Z_t|^2 \bigr) dt\Bigr] + \E \bigl[|\eta|^2\bigr] + \E \Bigl[\int_0^T\bigl(|\alpha_t|^2 + |\beta_t|^2 + |\gamma_t|^2 \bigr) dt\Bigr] \\
    & \qquad+ \E \Bigl[\int_0^T|\cY_t-\cU_t|^2 dt\Bigr]\Bigr).
\end{align*}
Choosing a small $\varepsilon_2$ (e.g., $\varepsilon_2 = \frac{1}{2C(1+T)}$),
we get the estimate of $Y,Z$
\begin{align*}
    & \sup_{0 \le t \le T} \E \bigl[|\Delta Y_t|^2\bigr] + \E\Bigl[\int_0^T |\Delta Z_t|^2 dt\Bigr] \\
    \le &C\Bigl(\E \bigl[|\eta|^2\bigr] + \E \Bigl[\int_0^T\bigl(|\alpha_t|^2 + |\beta_t|^2 + |\gamma_t|^2 \bigr) dt\Bigr] + \E \Bigl[\int_0^T|\cY_t-\cU_t|^2 dt\Bigr]\Bigr).
\end{align*}
Substituting it into \eqref{eq-estimate-X-sup} can derive the estimation of $X$
\begin{align*}
    \sup_{0 \le t \le T} \E \bigl[|\Delta X_t|^2\bigr]
    \le C\Bigl(\E \bigl[|\eta|^2\bigr] + \E \Bigl[\int_0^T\bigl(|\alpha_t|^2 + |\beta_t|^2 + |\gamma_t|^2 \bigr) dt\Bigr] + \E \Bigl[\int_0^T|\cY_t-\cU_t|^2 dt\Bigr]\Bigr).
\end{align*}
Thus the desired estimate holds in Case 1.

\medskip
\noindent
\textbf{Case 2: $m < n$ (the $(Y,Z)$-coercive regime).}
In this case $G$ has full row rank,
so there exists $c_G'>0$ such that $|G^\top y|\ge c_G'|y|$ for all $y\in\R^m$, and the same holds columnwise for matrices.
Hence $|G^\top\Delta Y|^2\ge (c_G')^2|\Delta Y|^2$ and $|G^\top\Delta Z|^2\ge (c_G')^2|\Delta Z|^2$.

Similarly,
Young's inequality gives
\[
    |\langle G\Delta X_T,\eta\rangle| \le \varepsilon_1|G\Delta X_T|^2 + \frac{1}{4\varepsilon_1}|\eta|^2.
\]
Thus, we will obtain from \eqref{eq-estimate-base}
\begin{equation}\label{eq-estimate-total-2}
    \begin{aligned}
        &\mu\E|G\Delta X_T|^2 + \beta_1\E\int_0^T |G\Delta X_t|^2 dt + \beta_2\E\int_0^T \bigl(|G^\top\Delta Y_t|^2+|G^\top\Delta Z_t|^2\bigr) dt \\
        &\le C_{\varepsilon_1}\E|\eta|^2 + \varepsilon_1 \E|G\Delta X_T|^2
        + \E\int_0^T \bigl( \varepsilon_1|G\Delta X_t|^2 + 2\varepsilon_2(|\Delta Y_t|^2+|\Delta Z_t|^2) \bigr) dt \\
        &\quad + \E\int_0^T \Bigl( C_{\varepsilon_2}(|\alpha_t|^2+|\beta_t|^2+|\cY_t-\cU_t|^2) + C_{\varepsilon_1}|\gamma_t|^2 \Bigr) dt.
    \end{aligned}
\end{equation}

Choose $\varepsilon_2 = \beta_2(c_G')^2/4$ in \eqref{eq-estimate-total-2}.
Dropping the non‑negative $\mu\E|G\Delta X_T|^2$ and $\beta_1\int|G\Delta X|^2$  and rearranging terms,
we obtain
\begin{align*}
    \frac{\beta_2(c_G')^2}{2}\E\int_0^T (|\Delta Y_t|^2+|\Delta Z_t|^2) dt &\le C_{\varepsilon_1}\E|\eta|^2 + \varepsilon_1\E|G\Delta X_T|^2+ \varepsilon_1\E\int_0^T |G\Delta X_t|^2 dt \\
    &+ C_{\varepsilon_2}\E\int_0^T (|\alpha_t|^2+|\beta_t|^2+|\cY_t-\cU_t|^2) dt + C_{\varepsilon_1}\E\int_0^T |\gamma_t|^2 dt.
\end{align*}
Now use the forward estimate \eqref{eq-estimate-X-sup} (note $|G\Delta X|^2\le \|G\|^2|\Delta X|^2$),
we have
\[
    \E |G\Delta X_T|^2 + \E\int_0^T |G\Delta X_t|^2 dt \le C\Bigl( \E\int_0^T (|\Delta Y_t|^2+|\Delta Z_t|^2) dt + \E\int_0^T (|\alpha_t|^2+|\beta_t|^2+|\cY_t-\cU_t|^2) dt \Bigr).
\]
Substituting this into the previous inequality gives
\[
    \E\int_0^T (|\Delta Y_t|^2+|\Delta Z_t|^2) dt \le C\Bigl( \E|\eta|^2 + \varepsilon_1\E\int_0^T (|\Delta Y|^2+|\Delta Z|^2) + \E\int_0^T (|\alpha|^2+|\beta|^2+|\gamma|^2+|\cY-\cU|^2) \Bigr).
\]
Choosing $\varepsilon_1$ sufficiently small (e.g., $\varepsilon_1 = \frac{1}{2C}$) allows us to absorb the $\varepsilon_1$ term,
yielding
\begin{equation}\label{eq-estimate-YZ}
    \E\int_0^T (|\Delta Y_t|^2+|\Delta Z_t|^2) dt \le C\Bigl( \E|\eta|^2 + \E\int_0^T (|\alpha_t|^2+|\beta_t|^2+|\gamma_t|^2+|\cY_t-\cU_t|^2) dt \Bigr).
\end{equation}
Plugging this bound into \eqref{eq-estimate-X-sup} gives the same estimate for $|\Delta X_t|^2$.
\[
    \sup_{0\le t\le T} \E |\Delta X_t|^2 \le C\Bigl( \E|\eta|^2 + \E\int_0^T (|\alpha_t|^2+|\beta_t|^2+|\gamma_t|^2+|\cY_t-\cU_t|^2) dt \Bigr).
\]
Finally, \eqref{eq-estimate-YZ-sup} together with \eqref{eq-estimate-YZ} and $\E|\Delta X_T|^2\le\sup_{0\le t\le T}\E|\Delta X_t|^2$ yields
\[
    \sup_{0\le t\le T}\E|\Delta Y_t|^2 \le C\Bigl( \E|\eta|^2 + \E\int_0^T (|\alpha_t|^2+|\beta_t|^2+|\gamma_t|^2+|\cY_t-\cU_t|^2) dt \Bigr).
\]
Thus the desired estimate also holds in Case 2.

\medskip
\noindent
\textbf{Case 3: $m=n$ (square case).}
When $m=n$, the matrix $G$ is invertible. If $\beta_2>0$, the argument of Case 2 applies; if $\beta_2=0$, then Assumption~\ref{assu-1} forces $\beta_1>0$ and $\mu>0$, and Case 1 applies. Hence the estimate follows.

\medskip
\noindent
Combining all cases completes the proof of Theorem~\ref{thm-continuous-general}.
\end{proof}

\subsection{Proof of Theorem~\ref{thm-discrete-general}}

\begin{proof}
We only prove \eqref{eq-discrete-general}, since \eqref{eq-discrete-general-grid}
follows immediately from \eqref{eq-discrete-general} by evaluating the estimate
at the grid points.

For each $i=0,\ldots,N-1$, set
\[
    b_i:=b(t_i,\hat X_i,\hat U_i,\hat Z_i),\qquad
    \sigma_i:=\sigma(t_i,\hat X_i,\hat U_i,\hat Z_i),\qquad
    f_i:=f(t_i,\hat X_i,\hat Y_i,\hat Z_i).
\]
By the martingale representation theorem, there exist progressively measurable
processes $\rho^{X,i}$ and $\rho^{Y,i}$ on $[t_i,t_{i+1}]$ such that
\[
    R_i^X
    =
    \E_{t_i}[R_i^X]
    +
    \int_{t_i}^{t_{i+1}}\rho_s^{X,i}\,dB_s,
\]
and
\[
    R_i^Y
    =
    \E_{t_i}[R_i^Y]
    +
    \int_{t_i}^{t_{i+1}}\rho_s^{Y,i}\,dB_s.
\]
Moreover,
\[
    \E\int_{t_i}^{t_{i+1}}|\rho_s^{X,i}|^2\,ds
    \le
    \E|R_i^X|^2,
    \qquad
    \E\int_{t_i}^{t_{i+1}}|\rho_s^{Y,i}|^2\,ds
    \le
    \E|R_i^Y|^2.
\]

Define, for $t\in[t_i,t_{i+1}]$,
\[
    \cX_t
    :=
    \hat X_i
    +
    \int_{t_i}^{t}
    \left(
        b_i+\frac{\E_{t_i}[R_i^X]}{\Delta t_i}
    \right)ds
    +
    \int_{t_i}^{t}
    \left(
        \sigma_i+\rho_s^{X,i}
    \right)dB_s,
\]
and
\[
    \cY_t
    :=
    \hat Y_i
    +
    \int_{t_i}^{t}
    \left(
        -f_i+\frac{\E_{t_i}[R_i^Y]}{\Delta t_i}
    \right)ds
    +
    \int_{t_i}^{t}
    \left(
        \hat Z_i+\rho_s^{Y,i}
    \right)dB_s.
\]
Set
\[
    \cZ_t:=\hat Z_i+\rho_t^{Y,i},\qquad
    \cU_t:=\hat U_i,\qquad t\in[t_i,t_{i+1}].
\]
Then, by the definitions of $R_i^X$ and $R_i^Y$, we have
\[
    \cX_{t_i}=\hat X_i,\qquad
    \cX_{t_{i+1}}=\hat X_{i+1},
\]
and
\[
    \cY_{t_i}=\hat Y_i,\qquad
    \cY_{t_{i+1}}=\hat Y_{i+1}.
\]
In particular,
\[
    \cX_0=a,\qquad
    \cY_T=\hat Y_N
    =
    g(\hat X_N)-\eta
    =
    g(\cX_T)-\eta .
\]
Equivalently,
\[
    Y_T-\cY_T
    =
    g(X_T)-g(\cX_T)+\eta .
\]

We now write $(\cX,\cY,\cZ,\cU)$ as a perturbed
continuous-time system. On $t\in[t_i,t_{i+1}]$, define
\[
    \alpha_t
    :=
    b_i+\frac{\E_{t_i}[R_i^X]}{\Delta t_i}
    -
    b(t,\cX_t,\cU_t,\cZ_t),
\]
\[
    \beta_t
    :=
    \sigma_i+\rho_t^{X,i}
    -
    \sigma(t,\cX_t,\cU_t,\cZ_t),
\]
and
\[
    \gamma_t
    :=
    f_i
    - f(t,\cX_t,\cY_t,\cZ_t) - 
    \frac{\E_{t_i}[R_i^Y]}{\Delta t_i}.
\]
Then $(\cX,\cY,\cZ)$ satisfies
\[
    d\cX_t
    =
    \bigl[
        b(t,\cX_t,\cU_t,\cZ_t)+\alpha_t
    \bigr]dt
    +
    \bigl[
        \sigma(t,\cX_t,\cU_t,\cZ_t)+\beta_t
    \bigr]dB_t,
\]
and
\[
    -d\cY_t
    =
    \bigl[
        f(t,\cX_t,\cY_t,\cZ_t)+\gamma_t
    \bigr]dt
    -
    \cZ_t\,dB_t,
\]
with terminal mismatch $\eta$. Hence Theorem~\ref{thm-continuous-general}
can be applied to compare $(X,Y,Z)$ and
$(\cX,\cY,\cZ)$.

It remains to estimate the perturbation terms. By the Lipschitz continuity in
$(x,y,z)$, the $1/2$-H\"older continuity in time, and the definitions above,
for $t\in[t_i,t_{i+1}]$,
\[
\begin{aligned}
|\alpha_t|^2
&\le
C\Bigl(
    |t-t_i|
    +
    |\cX_t-\hat X_i|^2
    +
    |\cZ_t-\hat Z_i|^2
    +
    \frac{|\E_{t_i}[R_i^X]|^2}{(\Delta t_i)^2}
\Bigr),
\\
|\beta_t|^2
&\le
C\Bigl(
    |t-t_i|
    +
    |\cX_t-\hat X_i|^2
    +
    |\cZ_t-\hat Z_i|^2
    +
    |\rho_t^{X,i}|^2
\Bigr),
\\
|\gamma_t|^2
&\le
C\Bigl(
    |t-t_i|
    +
    |\cX_t-\hat X_i|^2
    +
    |\cY_t-\hat Y_i|^2
    +
    |\cZ_t-\hat Z_i|^2
    +
    \frac{|\E_{t_i}[R_i^Y]|^2}{(\Delta t_i)^2}
\Bigr).
\end{aligned}
\]
Furthermore,
\[
    \cZ_t-\hat Z_i=\rho_t^{Y,i}.
\]
By Assumption~\ref{assu-2},
together with the linear growth of $b,\sigma,f$ (which follows from their Lipschitz continuity),
this implies
\[
\sup_i \E[|b_i|^2 + |\sigma_i|^2 + |f_i|^2] \le C.
\]
Using these bounds and the construction of $\cX_t$, $\cY_t$,
a standard computation yields
\[
    \sum_{i=0}^{N-1}
    \E\int_{t_i}^{t_{i+1}}
    |\cX_t-\hat X_i|^2\,dt
    \le
    C\bigl(|\pi|+\mathfrak R_\pi\bigr),
\]
and
\[
    \sum_{i=0}^{N-1}
    \E\int_{t_i}^{t_{i+1}}
    |\cY_t-\hat Y_i|^2\,dt
    \le
    C\bigl(|\pi|+\mathfrak R_\pi\bigr).
\]
Also,
\[
    \sum_{i=0}^{N-1}
    \E\int_{t_i}^{t_{i+1}}
    |\rho_t^{X,i}|^2\,dt
    \le
    \sum_{i=0}^{N-1}\E|R_i^X|^2
    \le
    |\pi|\mathfrak R_\pi
    \le
    T\mathfrak R_\pi,
\]
and similarly,
\[
    \sum_{i=0}^{N-1}
    \E\int_{t_i}^{t_{i+1}}
    |\rho_t^{Y,i}|^2\,dt
    \le
    T\mathfrak R_\pi.
\]
Moreover,
\[
    \sum_{i=0}^{N-1}
    \E\int_{t_i}^{t_{i+1}}
    \frac{|\E_{t_i}[R_i^X]|^2}{(\Delta t_i)^2}\,dt
    \le
    \sum_{i=0}^{N-1}
    \E\frac{|R_i^X|^2}{\Delta t_i},
\]
and
\[
    \sum_{i=0}^{N-1}
    \E\int_{t_i}^{t_{i+1}}
    \frac{|\E_{t_i}[R_i^Y]|^2}{(\Delta t_i)^2}\,dt
    \le
    \sum_{i=0}^{N-1}
    \E\frac{|R_i^Y|^2}{\Delta t_i}.
\]
Combining the preceding estimates gives
\[
    \E\int_0^T
    \bigl(
        |\alpha_t|^2+|\beta_t|^2+|\gamma_t|^2
    \bigr)dt
    \le
    C\bigl(\mathfrak R_\pi+|\pi|\bigr).
\]
Moreover, since $\cU_t=\hat U_i$ and
$\bar Y_t=\hat Y_i$ on $[t_i,t_{i+1})$,
\[
\begin{aligned}
    \E\int_0^T|\cY_t-\cU_t|^2dt
    &\le
    C\E\int_0^T|\cY_t-\bar Y_t|^2dt
    +
    C\E\int_0^T|\bar Y_t-\bar U_t|^2dt
    \\
    &\le
    C\bigl(|\pi|+\mathfrak R_\pi\bigr)
    +
    C\mathfrak P_\pi .
\end{aligned}
\]

Applying Theorem~\ref{thm-continuous-general}, we therefore obtain
\[
\begin{aligned}
    \sup_{0\le t\le T}
    \E\bigl[
        |X_t-\cX_t|^2
        +
        |Y_t-\cY_t|^2
    \bigr]
    +
    \E\int_0^T|Z_t-\cZ_t|^2dt
    \le
    C\bigl(
        \E|\eta|^2
        +
        \mathfrak R_\pi
        +
        \mathfrak P_\pi
        +
        |\pi|
    \bigr).
\end{aligned}
\]

It remains to pass from the continuous interpolation
$(\cX,\cY,\cZ)$ to the piecewise constant interpolation
$(\bar X,\bar Y,\bar Z)$. From the estimates above,
\[
    \sup_{0\le t\le T}
    \E|\cX_t-\bar X_t|^2
    +
    \sup_{0\le t\le T}
    \E|\cY_t-\bar Y_t|^2
    \le
    C\bigl(|\pi|+\mathfrak R_\pi\bigr),
\]
and
\[
    \E\int_0^T|\cZ_t-\bar Z_t|^2dt
    =
    \sum_{i=0}^{N-1}
    \E\int_{t_i}^{t_{i+1}}|\rho_t^{Y,i}|^2dt
    \le
    C\mathfrak R_\pi.
\]
Consequently, by the triangle inequality,
\[
\begin{aligned}
    &\sup_{0\le t\le T}
    \E\bigl[
        |X_t-\bar X_t|^2
        +
        |Y_t-\bar Y_t|^2
    \bigr]
    +
    \E\int_0^T|Z_t-\bar Z_t|^2dt
    \\
    &\qquad\le
    C\bigl(
        \E|\eta|^2
        +
        \mathfrak R_\pi
        +
        \mathfrak P_\pi
        +
        |\pi|
    \bigr).
\end{aligned}
\]
This proves \eqref{eq-discrete-general}. The grid-point estimate
\eqref{eq-discrete-general-grid} follows since
$\bar X_{t_i}=\hat X_i$ and $\bar Y_{t_i}=\hat Y_i$.
The proof is complete.
\end{proof}

\section{Numerical Experiments}
\label{sec-numerics}

In this section we present numerical experiments designed to illustrate the a posteriori error estimates developed in Section~\ref{sec-error-analysis}.
The purpose of the experiments is not to prove convergence rates for a particular neural network architecture,
but rather to examine whether the computable quantities appearing in Theorem~\ref{thm-discrete-general},
namely
\[
    \E[|\eta|^2],\qquad \mathfrak R_\pi,\qquad \mathfrak P_\pi,
\]
provide meaningful diagnostic information for deep learning approximations of fully coupled FBSDEs.

The experiments are organized as follows.
In the first example,
an explicit solution is available through a Riccati equation.
This allows us to compare the neural network approximation with a reference solution and to check whether the a posteriori indicators are consistent with the observed approximation error.
In the second example,
we consider a Burgers-type fully coupled FBSDE for which no closed-form solution is used.
This example is intended to test whether the three components of the a posteriori loss can serve as computable diagnostics and whether removing some of them leads to less stable or less consistent numerical approximations.

\subsection{Network Architecture and Training Configuration}
\label{ssec-exp-setup}

We now describe how the abstract discrete approximation appearing in
Theorem~\ref{thm-discrete-general} is realized by neural network parameterizations.
Let
\[
    \pi=\{0=t_0<t_1<\cdots<t_N=T\},\qquad \Delta t=\frac{T}{N},
\]
be a uniform partition of the time interval.
The numerical method constructs a discrete adapted trajectory
\[
    \{(\hat X_i,\hat Y_i,\hat Z_i,\hat U_i)\}_{i=0}^{N-1}, \qquad (\hat X_N,\hat Y_N),
\]
where the auxiliary process $\hat U_i$ is used in the forward coefficients and is allowed to differ from $\hat Y_i$.

We introduce three families of feedforward neural networks:
\begin{align}
    \cY_i(\cdot;\theta_i^Y) &: \R^n\to\R^m, \qquad i=0,1,\ldots,N, \label{net-Y}\\
    \cZ_i(\cdot;\theta_i^Z) &: \R^n\to\R^{m\times d}, \qquad i=0,1,\ldots,N-1, \label{net-Z}\\
    \cU_i(\cdot;\theta_i^U) &: \R^n\to\R^m, \qquad i=0,1,\ldots,N-1. \label{net-U}
\end{align}
Here the networks $\{\cZ_i\}$ and $\{\cU_i\}$ serve as standard approximations of the control process and the auxiliary control entering the forward dynamics,
respectively.  
The backward component is, however, constructed recursively via \emph{residual networks}.
Specifically, $\cY_0$ acts as an initial value network,
while for $i\ge 1$,
each $\cY_i$ directly outputs the one-step backward residual $R_{i-1}^Y$ defined in~\eqref{eq-res-Y}.
This parameterization exposes the pathwise residual exactly where it is needed for the a posteriori loss.

Concretely, given simulated Brownian increments $\Delta B_i$,
the discrete trajectory is generated by
\begin{align}
    \hat Y_0 &= \cY_0(\hat X_0;\theta_0^Y), \label{eq-num-Y0}\\
    \hat Z_i &= \cZ_i(\hat X_i;\theta_i^Z),\qquad
    \hat U_i = \cU_i(\hat X_i;\theta_i^U). \label{eq-num-ZU}\\
    \hat X_{i+1} &= \hat X_i + b(t_i,\hat X_i,\hat U_i,\hat Z_i)\Delta t
                  + \sigma(t_i,\hat X_i,\hat U_i,\hat Z_i)\Delta B_i, \label{eq-num-X}\\
    \hat Y_{i+1} &= \hat Y_i - f(t_i,\hat X_i,\hat Y_i,\hat Z_i)\Delta t
                  + \hat Z_i\,\Delta B_i + \cY_{i+1}(\hat X_{i+1};\theta_{i+1}^Y). \label{eq-num-Y}
\end{align}
Comparing \eqref{eq-num-Y} with \eqref{eq-res-Y} shows that
\[
    R_i^Y = \cY_{i+1}(\hat X_{i+1};\theta_{i+1}^Y),
\]
i.e., the residual network $\cY_{i+1}$ directly learns the defect in the
backward dynamics.
The terminal defect is
\[
    \eta = g(\hat X_N) - \hat Y_N .
\]
The empirical training loss is chosen in accordance with Theorem~\ref{thm-discrete-general} and takes the three-component form
\begin{equation}\label{eq-training-loss}
    \mathcal L = \mathcal L_T + \lambda_R \mathcal L_R + \lambda_U \mathcal L_U,
\end{equation}
where
\begin{align}
    \mathcal L_T &= \widehat{\E}\bigl[|g(\hat X_N)-\hat Y_N|^2\bigr],\\
    \mathcal L_R &= \sum_{i=0}^{N-1}\widehat{\E}\biggl[\frac{|\cY_{i+1}(\hat X_{i+1})|^2}{\Delta t}\biggr]
                 = \sum_{i=0}^{N-1}\widehat{\E}\biggl[\frac{|R_i^Y|^2}{\Delta t}\biggr],\\
    \mathcal L_U &= \sum_{i=0}^{N-1}
    \widehat{\E}\bigl[|\hat Y_i-\hat U_i|^2\bigr]\Delta t .
\end{align}
The constants $\lambda_R$ and $\lambda_U$ are penalty weights;
they are not to be confused with the coupling parameters of the FBSDE itself.
The loss \eqref{eq-training-loss} is a Monte Carlo approximation of the computable terms appearing in the a posteriori bound,
up to the deterministic discretization term $|\pi|$.

The complete training procedure, which integrates the forward simulation,
the computation of the three-component loss \eqref{eq-training-loss},
and the parameter updates,
is summarized in Algorithm~\ref{alg:training}.
At each iteration, a batch of Brownian motion paths is generated,
the forward SDE is solved using the current networks $\{\mathcal{Y}_i, \mathcal{Z}_i, \mathcal{U}_i\}$,
and the empirical loss $\mathcal{L}$ is evaluated by Monte Carlo averaging.
All network parameters are then updated by a stochastic gradient descent variant (Adam).
The algorithm corresponds exactly to the discrete scheme described in Section~\ref{ssec-disc-general},
and every term in the loss function mirrors a computable component of the a posteriori bound in Theorem~\ref{thm-discrete-general}.

\begin{algorithm}[tb]
\caption{Deep FBSDE with control mismatch}
\label{alg:training}
\begin{algorithmic}
\State \textbf{Input:} Time grid $\{t_i\}_{i=0}^N$, number of sample paths $M$, penalty weights $\lambda_R, \lambda_U$, 
network architectures for $\{\mathcal{Y}_i\}_{i=0}^N$, $\{\mathcal{Z}_i\}_{i=0}^{N-1}$, $\{\mathcal{U}_i\}_{i=0}^{N-1}$.
\State \textbf{Initialize:} All network parameters $\theta$.
\For{\texttt{iter} = 1 to \texttt{max\_iter}}
    \State Generate $M$ independent Brownian increments $\{\Delta B_i^{(m)}\}_{i=0}^{N-1}$.
    \For{$m = 1, \dots, M$}
        \State $\hat X_0^{(m)} \gets a$
        \State $\hat Y_0^{(m)} \gets \mathcal{Y}_0(\hat X_0^{(m)};\theta_0^Y)$
        \State $\mathcal{L}_R^{(m)} \gets 0$, $\mathcal{L}_U^{(m)} \gets 0$
        \For{$i = 0, \dots, N-1$}
            \State $\hat Z_i^{(m)} \gets \mathcal{Z}_i(\hat X_i^{(m)};\theta_i^Z)$,
            $\hat U_i^{(m)} \gets \mathcal{U}_i(\hat X_i^{(m)};\theta_i^U)$
            \State $\hat X_{i+1}^{(m)} \gets \hat X_i^{(m)} + b(t_i, \hat X_i^{(m)}, \hat U_i^{(m)}, \hat Z_i^{(m)}) \Delta t + \sigma(t_i, \hat X_i^{(m)}, \hat U_i^{(m)}, \hat Z_i^{(m)}) \Delta B_i^{(m)}$
            \State $e_i^{(m)} \gets \mathcal{Y}_{i+1}(\hat X_{i+1}^{(m)};\theta_{i+1}^Y)$
            \State $\hat Y_{i+1}^{(m)} \gets \hat Y_i^{(m)} - f(t_i, \hat X_i^{(m)}, \hat Y_i^{(m)}, \hat Z_i^{(m)}) \Delta t + \;\hat Z_i^{(m)} \Delta B_i^{(m)} + e_i^{(m)}$
            \State $\mathcal{L}_R^{(m)} \gets \mathcal{L}_R^{(m)} + |e_i^{(m)}|^2 / \Delta t$
            \State $\mathcal{L}_U^{(m)} \gets \mathcal{L}_U^{(m)} + |\hat Y_i^{(m)} - \hat U_i^{(m)}|^2 \Delta t$
            \State $\hat X_i^{(m)} \gets \hat X_{i+1}^{(m)}$, $\hat Y_i^{(m)} \gets \hat Y_{i+1}^{(m)}$
        \EndFor
        \State $\mathcal{L}_T^{(m)} \gets |g(\hat X_N^{(m)}) - \hat Y_N^{(m)}|^2$
    \EndFor
    \State $\mathcal{L} \gets \frac{1}{M} \sum_{m=1}^M \big( \mathcal{L}_T^{(m)} + \lambda_R \mathcal{L}_R^{(m)} + \lambda_U \mathcal{L}_U^{(m)} \big)$
    \State Update all parameters $\theta$ using Adam optimizer on $\nabla_\theta \mathcal{L}$
\EndFor
\State \textbf{return} optimized parameters $\theta$.
\end{algorithmic}
\end{algorithm}

\subsection{Example 1: An FBSDE with Explicit Solution}
\label{ssec-example1}

We first consider a multi-dimensional fully coupled FBSDE with a linear-quadratic structure.
The exact solution is available through the associated Riccati equation,
which makes this example suitable for checking the relationship between the computable indicators and the true approximation error.  

The system is
\begin{equation}\label{eq:ex1}
    \begin{cases}
        dX_t^i=\bigl(-2X_t^i+Y_t^i\bigr)\,dt+\bigl(3X_t^i+Z_t^i\bigr)\,dB_t^i,\\[3pt]
        -dY_t^i=\bigl(-X_t^i-2Y_t^i+3Z_t^i\bigr)\,dt-Z_t^i\,dB_t^i,\\[3pt]
        X_0=a,\qquad Y_T=-QX_T,
    \end{cases}
\end{equation}
where
\[
    X_t=(X_t^1,\ldots,X_t^n)^\top,\qquad
    Y_t=(Y_t^1,\ldots,Y_t^n)^\top,\qquad
    Z_t=\operatorname{diag}(Z_t^1,\ldots,Z_t^n),
\]
and $B_t=(B_t^1,\ldots,B_t^n)^\top$ is an $n$-dimensional Brownian motion.

In this example, we set the dimension $n=100$ for the numerical experiments,
the initial condition $a=\mathbf{1}_n$ (the all-ones vector),
the terminal time $T=0.1$,
and the terminal coefficient $Q=5.0\,I_n$.
The time horizon is partitioned into $N$ equal subintervals.
Unless otherwise stated, we take $N=20$ as the default grid;
variations of $N$ (e.g., $N=10, 30, 40, 50$) are explicitly indicated when analyzing the discretization effect in Figure~\ref{fig:N_effect} and Table~\ref{tab:N_indicators}.

We seek a solution of the form
\[
    Y_t=-K_tX_t, \qquad Z_t=\operatorname{diag}(-M_tX_t).
\]
Substitution into \eqref{eq:ex1} gives the Riccati system
\begin{equation}\label{eq:Riccati}
    \begin{cases}
        \dot K_t-K_t^2-4K_t+3M_t+I_n=0,\\
        -3K_t+K_tM_t+M_t=0,\\
        K_T=Q.
    \end{cases}
\end{equation}
With the above parameters,
a Runge--Kutta solver for \eqref{eq:Riccati} gives
\[
    Y_0=-2.8730\,\mathbf 1_n .
\]
This value is used as the benchmark for the numerical approximation.


Figure~\ref{fig:training} reports the training dynamics for a representative run.
The terminal defect,
the pathwise residual,
and the control mismatch all decrease during training.
The error in the initial value $Y_0$ also decreases over the training iterations.
This indicates that,
for a fixed time grid and a fixed network configuration,
the computable loss is consistent with the improvement of the approximation.
We emphasize,
however,
that the absolute value of the loss should not be interpreted as a direct pointwise estimate of the error,
because it is affected by the time discretization,
the accumulation of local residuals,
the penalty weights,
and the optimization difficulty.

\begin{figure}[htb]
    \centering
    \begin{subfigure}[b]{0.48\textwidth}
        \centering
        \includegraphics[width=\textwidth]{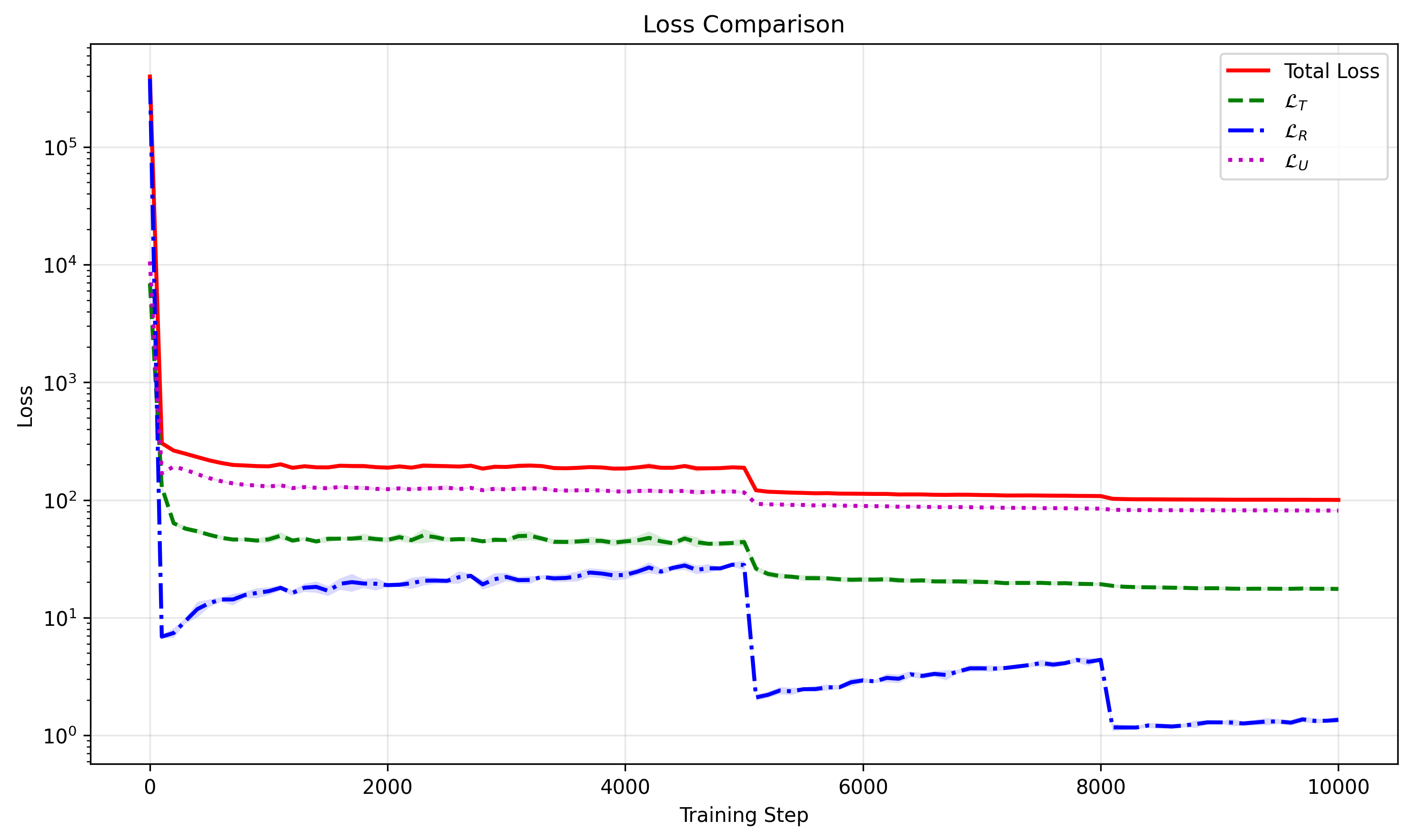}
        \caption{Training loss}
        \label{fig:sub-loss}
    \end{subfigure}
    \hfill
    \begin{subfigure}[b]{0.48\textwidth}
        \centering
        \includegraphics[width=\textwidth]{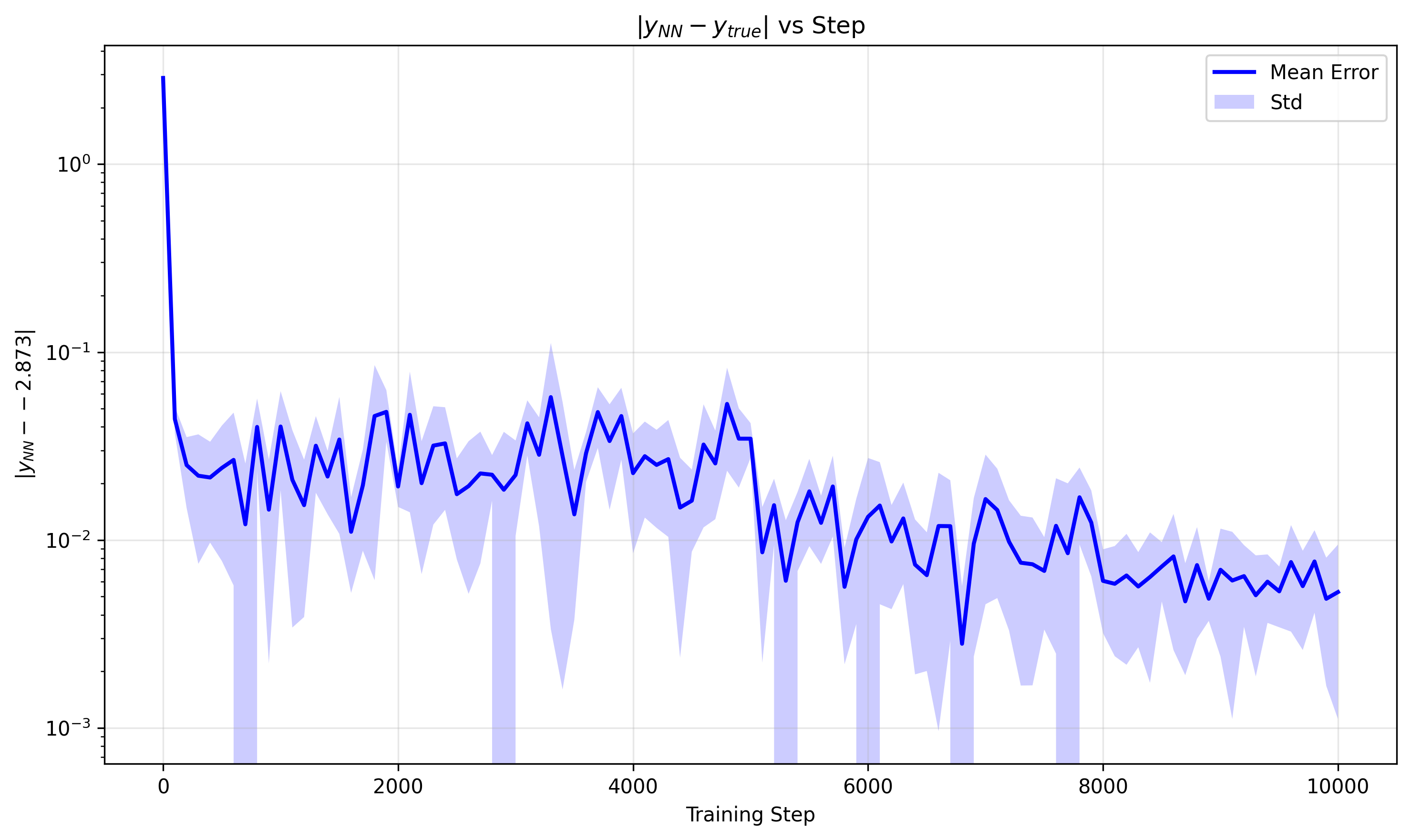}
        \caption{$Y_0$ error}
        \label{fig:sub-y0error}
    \end{subfigure}
    \caption{Training dynamics of the deep FBSDE solver.
        Panel (a) shows the total loss $\mathcal L$ and its components $\mathcal L_T$, $\mathcal L_R$,
        and $\mathcal L_U$ on a logarithmic scale.
        Panel (b) shows the absolute error $|Y_0^{\rm NN}-Y_0|$.
        The results suggest that the decrease of the computable a posteriori loss is accompanied by an improvement in the estimated initial value.}
    \label{fig:training}
\end{figure}

The influence of the time discretization is examined in Figure~\ref{fig:N_effect} and Table~\ref{tab:N_indicators}.
The table reports the terminal indicator,
the residual indicator,
the mismatch indicator,
the total loss,
and the absolute error in $Y_0$,
averaged over five independent runs.

\begin{figure}[htb]
    \centering
    \includegraphics[width=0.65\textwidth]{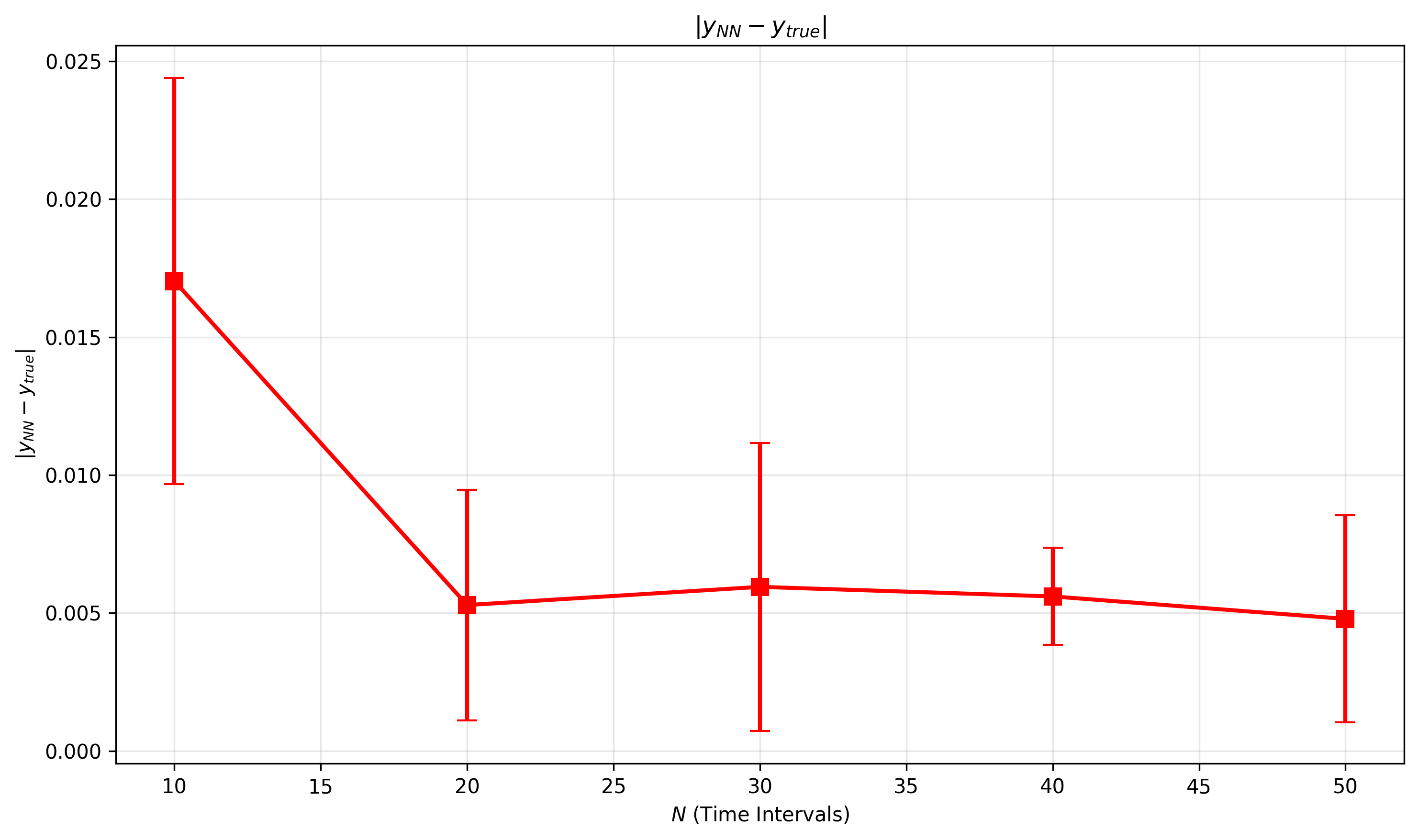}
    \caption{Absolute error $|Y_0^{\mathrm{NN}}-Y_0|$ for different numbers of time steps $N$,\
        averaged over five independent runs.
        The reference value is obtained from the Riccati equation.
        After $N=20$ the error stabilizes,
        and further grid refinement does not lead to a significant improvement in this experiment.}
    \label{fig:N_effect}
\end{figure}

\begin{table}[htb]
    \centering
    \caption{A posteriori indicators and $Y_0$ error for different $N$.
        Mean and standard deviation over five runs are reported.}
    \label{tab:N_indicators}
    \resizebox{\textwidth}{!}{
    \begin{tabular}{cccccc}
        \toprule
        $N$ & $\hat{\eta}^2$ & $\hat{\mathfrak{R}}_\pi$ & $\hat{\mathfrak{P}}_\pi$ & Total loss & $|Y_0^{\mathrm{NN}}-Y_0|$ \\
        \midrule
        10 & 8.191 $\pm$ 0.228 & 0.454 $\pm$ 0.036 & 33.75 $\pm$ 1.061 & 42.39 $\pm$ 1.191 & 0.017027 $\pm$ 0.0074 \\
        20 & 17.53 $\pm$ 0.611 & 1.354 $\pm$ 0.083 & 81.29 $\pm$ 1.400 & 100.2 $\pm$ 1.918 & 0.005288 $\pm$ 0.0042 \\
        30 & 31.28 $\pm$ 1.563 & 2.679 $\pm$ 0.040 & 128.6 $\pm$ 1.671 & 162.6 $\pm$ 3.090 & 0.005947 $\pm$ 0.0052 \\
        40 & 47.82 $\pm$ 2.407 & 3.909 $\pm$ 0.188 & 164.8 $\pm$ 1.464 & 216.5 $\pm$ 3.805 & 0.005601 $\pm$ 0.0018 \\
        50 & 62.52 $\pm$ 4.394 & 5.005 $\pm$ 0.263 & 197.1 $\pm$ 3.290 & 264.6 $\pm$ 7.137 & 0.004788 $\pm$ 0.0038 \\
        \bottomrule
    \end{tabular}
    }
\end{table}

Several observations can be made.
First, the total computable loss increases with $N$ in this experiment.
This does not contradict the a posteriori estimate,
since the empirical loss is a sum of local residual-type quantities and its magnitude is also influenced by the number of time steps,
the number of networks to be trained,
and the optimization landscape.
Second, the error in $Y_0$ drops markedly from $N=10$ to $N=20$ and then remains roughly stable,
fluctuating around $0.005$ without a clear monotone trend for larger $N$.
This indicates that the time discretization error is already well controlled with a moderate number of time steps,
while further grid refinement does not bring a significant additional benefit under the present network and training setup.
A finer grid reduces the local discretization error in principle,
but it also increases the number of neural network components and may make the optimization problem more difficult.
The results also confirm that the a posteriori indicators should be interpreted as diagnostic tools rather than as a simple monotone ranking across different discretizations.
For a fixed discretization,
their decrease reflects better satisfaction of the terminal condition,
the backward dynamics,
and the control consistency.
Across different discretizations,
they help reveal the interplay between discretization accuracy and optimization difficulty.

Figure~\ref{fig:trajectory} compares the true and approximated trajectories of $Y_t$ and $Z_t$ along one representative Brownian path for a 5-dim case with $N=50$.
The value process $Y$ is approximated very accurately along this path,
whereas the control process $Z$ exhibits a more visible discrepancy.
This is consistent with the fact that $Z$ is a gradient-type quantity and is usually more sensitive to local fluctuations of the Brownian path.
The result supports the use of residual and mismatch indicators as useful diagnostics,
but one should not infer full pathwise accuracy from a single trajectory alone.

\begin{figure}[htb]
    \centering
    \begin{subfigure}[b]{0.48\textwidth}
        \centering
        \includegraphics[width=\textwidth]{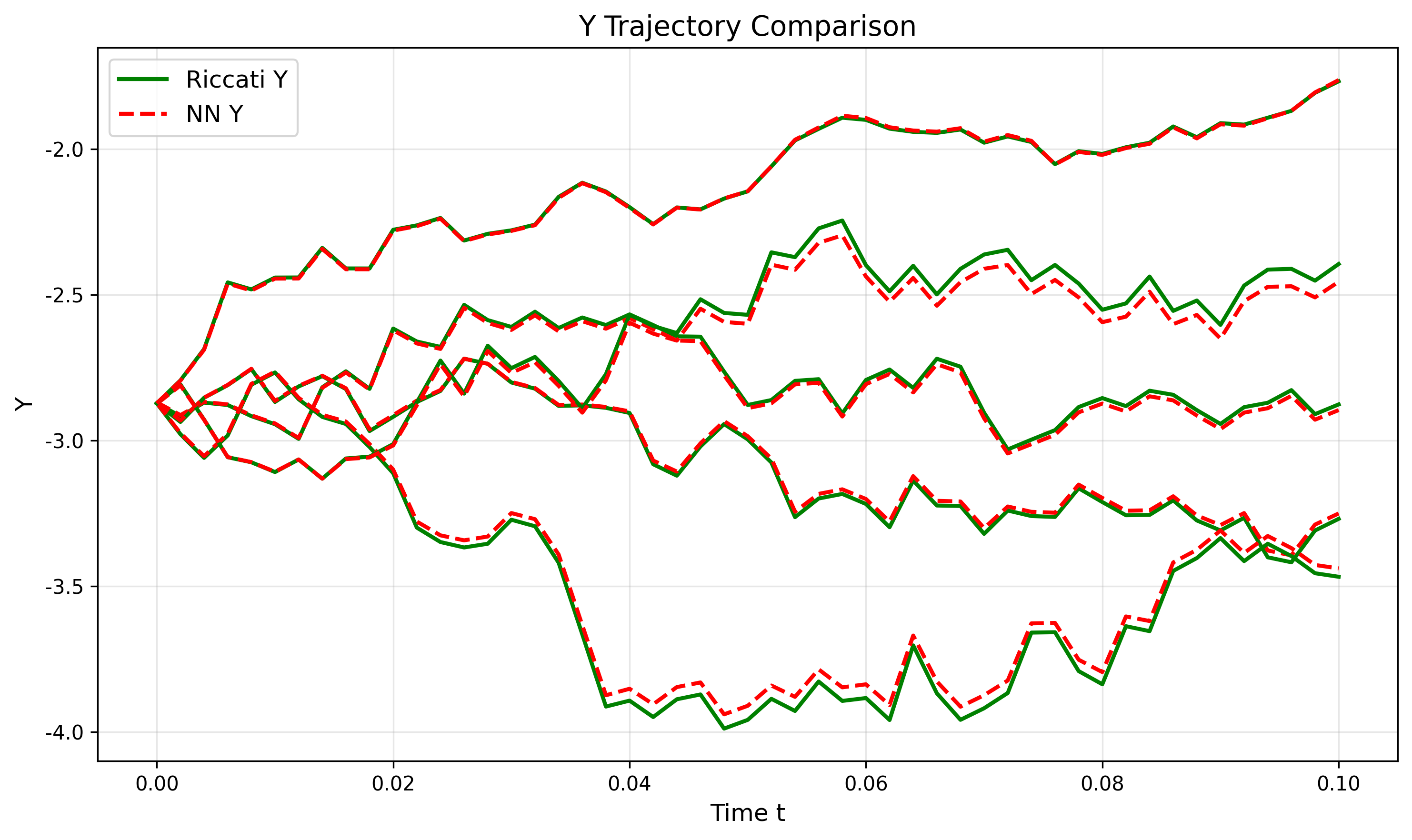}
        \caption{Trajectory of $Y_t$}
        \label{fig:sub-yfit}
    \end{subfigure}
    \hfill
    \begin{subfigure}[b]{0.48\textwidth}
        \centering
        \includegraphics[width=\textwidth]{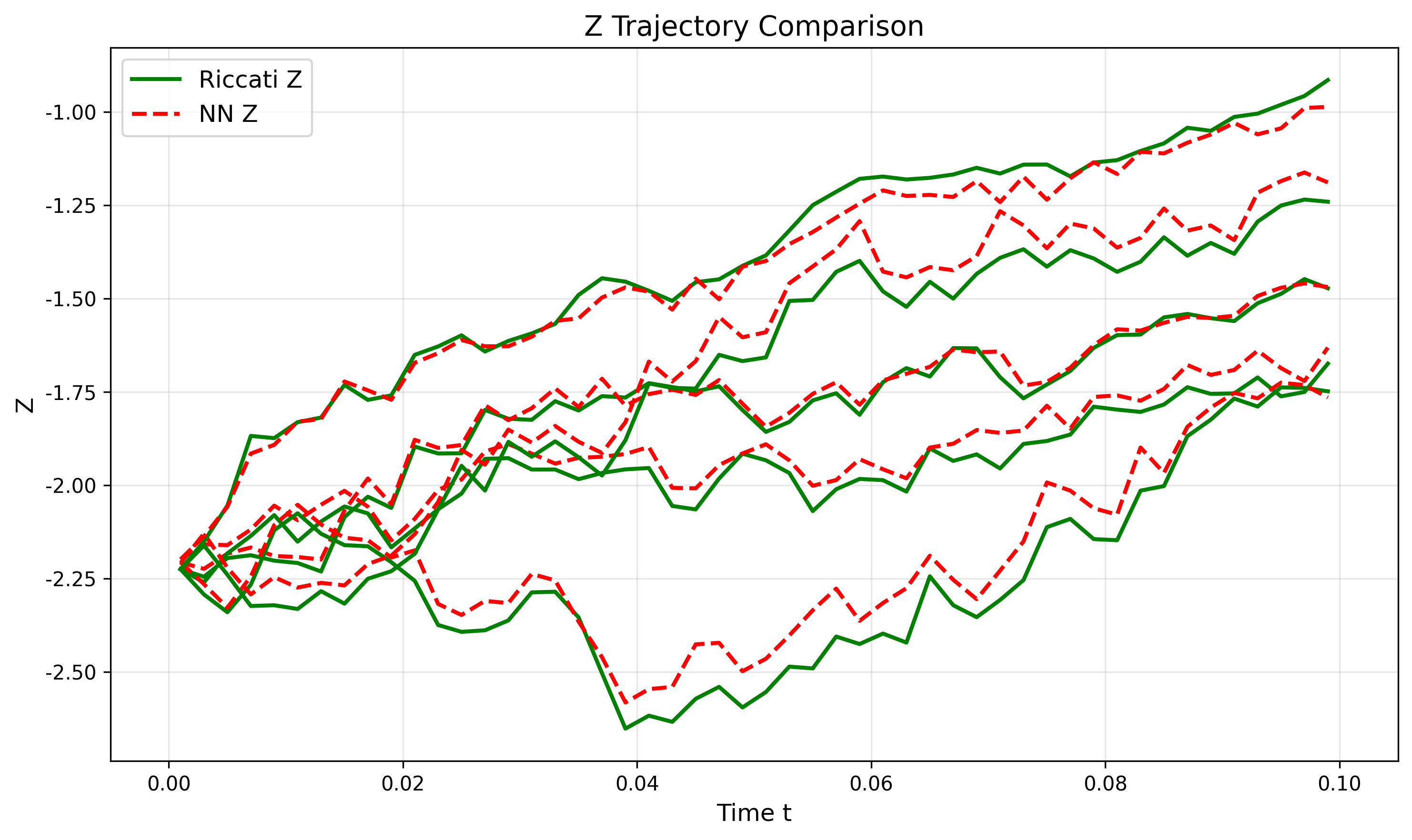}
        \caption{Trajectory of $Z_t$}
        \label{fig:sub-zfit}
    \end{subfigure}
    \caption{Comparison of the true trajectories and the neural network approximations for one representative path in the five-dimensional case with $N=50$.
        The $Y$ component is accurately reproduced along this path,
        while the $Z$ component is more difficult to approximate.}
    \label{fig:trajectory}
\end{figure}

\subsection{Example 2: FBSDE Associated with a Burgers-Type PDE}
\label{ssec-example2}

We next consider a Burgers-type fully coupled FBSDE with a tunable feedback strength.
Let $X_t\in\R^n$, $Y_t\in\R$, and $Z_t\in\R^n$.
For a viscosity parameter $\nu>0$,
a damping coefficient $\lambda\in\R$,
and a coupling strength $\rho\ge 0$,
consider
\begin{equation}\label{eq:ex2}
    \begin{cases}
        dX_t = -\rho Y_t\mathbf 1_n\,dt+\sqrt{2\nu}\,dB_t,\\[4pt]
        -dY_t = \lambda Y_t\,dt-Z_t^\top dB_t,\\[4pt]
        X_0=x_0,\qquad Y_T=g(X_T),
    \end{cases}
\end{equation}
where $B_t$ is an $n$-dimensional Brownian motion.
The parameter $\rho$ controls the strength of the feedback from the backward component $Y_t$ into the forward dynamics.
When $\rho=0$, the forward equation is decoupled from $Y$; when $\rho>0$,
the forward dynamics depend directly on the backward component.

Assume that the FBSDE admits a smooth decoupling field $u:[0,T]\times\R^n\to\R$ such that
\[
    Y_t=u(t,X_t),\qquad Z_t=\sqrt{2\nu}\,\nabla_x u(t,X_t).
\]
By It\^o's formula,
$u$ satisfies
\begin{equation}\label{eq-pde}
    \begin{cases}
        \partial_t u -\rho u\,\mathbf 1_n\cdot\nabla_xu + \nu\Delta_xu+\lambda u=0,
        \qquad (t,x)\in[0,T)\times\R^n,\\[4pt]
        u(T,x)=g(x).
    \end{cases}
\end{equation}
Equivalently, after the time reversal $v(\tau,x)=u(T-\tau,x)$,
one obtains the viscous damped Burgers-type equation~\cite{CruzeiroShamarova2011,Brze2014}
\[
    \partial_\tau v + \rho v\,\mathbf 1_n\cdot\nabla_xv = \nu\Delta_xv+\lambda v .
\]

In the numerical experiment,
we set
\[
    n=10,\qquad g(x)=\sin\!\left(\sum_{i=1}^{10}x_i\right),\qquad T=1,
\]
and
\[
    x_0=\mathbf 0,\qquad\nu=0.2,\qquad\lambda=0.5,\qquad\rho=0.5 .
\]

The time interval $[0,T]$ is uniformly partitioned into $N$ subintervals.
Unless otherwise stated, we take $N=20$ as the default grid.
Since no closed-form solution is available for this configuration,
this example is not intended to measure the true approximation error directly.
Instead, it examines whether the three computable components $\mathcal L_T$, $\mathcal L_R$, and $\mathcal L_U$
provide useful information about the consistency and stability of the trained solution.

We consider the following configurations:
\begin{itemize}
    \item \textbf{Full loss}: all three terms $\mathcal L_T$, $\mathcal L_R$, and $\mathcal L_U$ are included.
    The penalty weights $\lambda_R$ and $\lambda_U$ are varied to examine the sensitivity of the method.
    \item \textbf{w/o R}: the pathwise residual term $\mathcal L_R$ is removed,
    while the control mismatch term $\mathcal L_U$ is retained.
    \item \textbf{w/o RU}: both $\mathcal L_R$ and $\mathcal L_U$ are removed,
    leaving only the terminal loss.
\end{itemize}
The two ablation configurations are used to assess the numerical role of the residual and mismatch terms.
Since no reference solution is available,
the comparison should be interpreted in terms of stability, consistency, and sensitivity of the estimates,
rather than in terms of exact accuracy.

Figures~\ref{fig:loss_betaB} and~\ref{fig:loss_betaP} display the training loss curves for different choices of the penalty weights.
Each panel reports the total loss and its components on a logarithmic scale.
The loss curves show that the three components can be reduced simultaneously under the full-loss configuration.
This indicates that the trained networks can approximately satisfy the terminal condition, the discrete backward dynamics, and the control consistency condition at the same time.
The final loss levels vary with the penalty weights,
which is expected because $\lambda_R$ and $\lambda_U$ change the relative strength of the corresponding soft constraints.

\begin{figure}[htb!]
    \centering
    \includegraphics[width=1.0\textwidth]{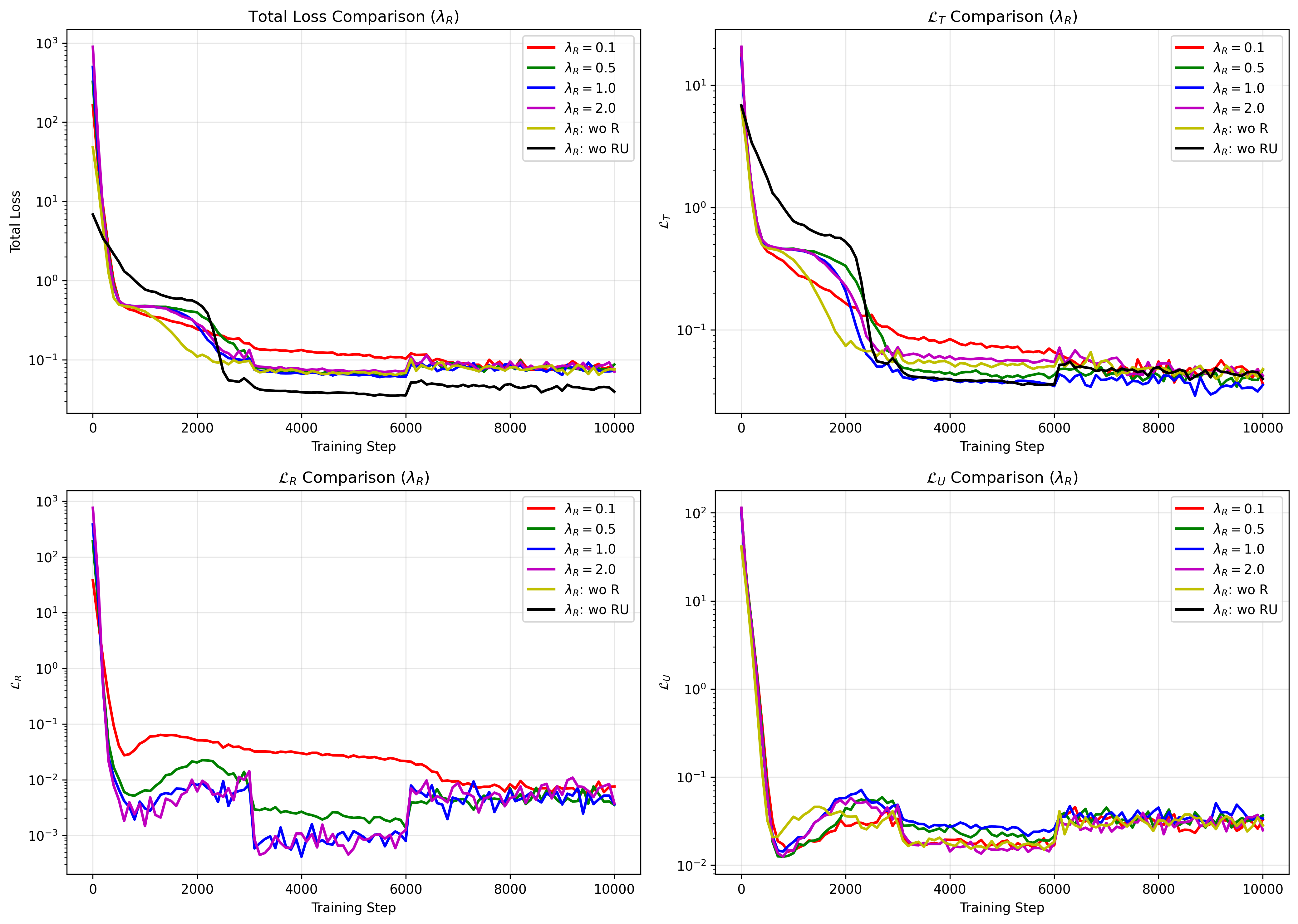}
    \caption{Training loss curves for different residual penalty weights $\lambda_R$.
    Each panel shows the total loss together with $\mathcal L_T$, $\mathcal L_R$, and $\mathcal L_U$.}
    \label{fig:loss_betaB}
\end{figure}

\begin{figure}[htb!]
    \centering
    \includegraphics[width=1.0\textwidth]{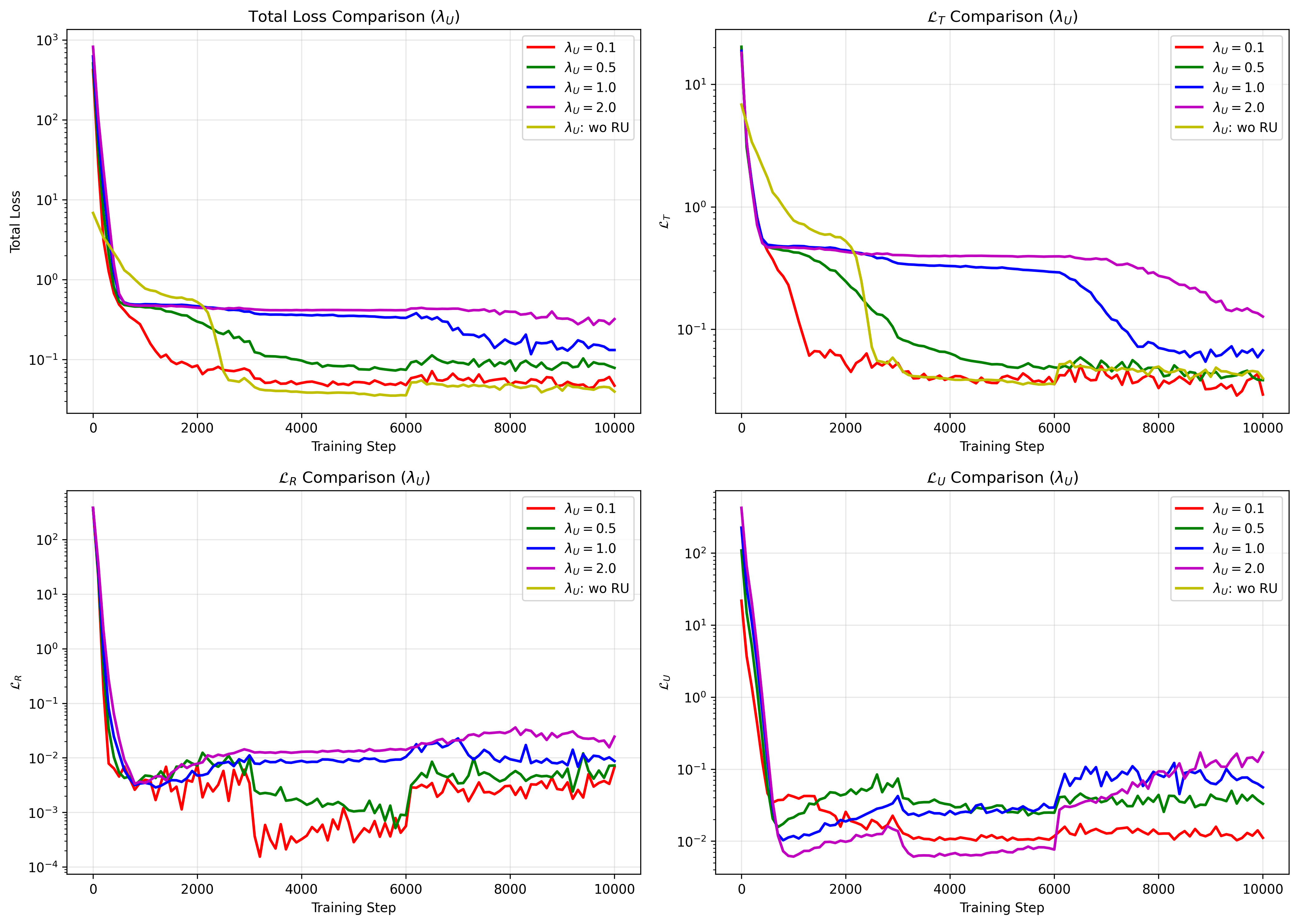}
    \caption{Training loss curves for different mismatch penalty weights $\lambda_U$.\
    The layout is the same as in Figure~\ref{fig:loss_betaB}.}
    \label{fig:loss_betaP}
\end{figure}

Tables~\ref{tab:ablation_betaB} and~\ref{tab:ablation_betaP} report the estimated values of $Y_0$ and a scalar summary of $Z_0$ for different configurations.
Since $Z_0\in\R^n$ in this example,
the reported value of $Z_0$ should be understood as the first value of its components in this example, namely$\bar Z_0:=Z_0^1$.

\begin{table}[htbp!]
    \centering
    \caption{Ablation study: estimated $Y_0$ and scalar summary $\bar Z_0$ for the full-loss configuration with different residual penalty weights $\lambda_R$, and for two ablation configurations.
    ``w/o R'' removes the pathwise residual $\mathcal L_R$; ``w/o RU'' removes both $\mathcal L_R$ and the control mismatch $\mathcal L_U$.
    Mean and standard deviation over five runs are reported.}
    \label{tab:ablation_betaB}
    \begin{tabular}{ccc}
        \toprule
        $\lambda_R$ / Configuration
        & $Y_0$ (mean $\pm$ std)
        & $\bar Z_0$ (mean $\pm$ std) \\
        \midrule
        0.1  & 0.256436 $\pm$ 0.003725 & 0.110142 $\pm$ 0.010282 \\
        0.5  & 0.242612 $\pm$ 0.003119 & 0.115508 $\pm$ 0.009206 \\
        1.0  & 0.237267 $\pm$ 0.006471 & 0.122293 $\pm$ 0.009611 \\
        2.0  & 0.229739 $\pm$ 0.003257 & 0.120487 $\pm$ 0.012230 \\
        w/o R  & 0.227853 $\pm$ 0.011740 & 0.135288 $\pm$ 0.009247 \\
        w/o RU & 0.233889 $\pm$ 0.002582 & 0.135016 $\pm$ 0.003326 \\
        \bottomrule
    \end{tabular}
\end{table}

\begin{table}[htb!]
    \centering
    \caption{Ablation study: estimated $Y_0$ and scalar summary $\bar Z_0$ for the full-loss configuration with different mismatch penalty weights $\lambda_U$, and for the baseline ``w/o RU''.  Mean and standard deviation over five runs are reported.}
    \label{tab:ablation_betaP}
    \begin{tabular}{ccc}
        \toprule
        $\lambda_U$ / Configuration
        & $Y_0$ (mean $\pm$ std)
        & $\bar Z_0$ (mean $\pm$ std) \\
        \midrule
        0.1  & 0.233716 $\pm$ 0.005595 & 0.138136 $\pm$ 0.006634 \\
        0.5  & 0.234612 $\pm$ 0.004430 & 0.119596 $\pm$ 0.012485 \\
        1.0  & 0.233662 $\pm$ 0.003395 & 0.120903 $\pm$ 0.004734 \\
        2.0  & 0.221752 $\pm$ 0.003079 & 0.086442 $\pm$ 0.018901 \\
        w/o RU & 0.233889 $\pm$ 0.002582 & 0.135016 $\pm$ 0.003326 \\
        \bottomrule
    \end{tabular}
\end{table}

\begin{figure}[htb!]
    \centering
    \begin{subfigure}[b]{0.48\textwidth}
        \centering
        \includegraphics[width=\textwidth]{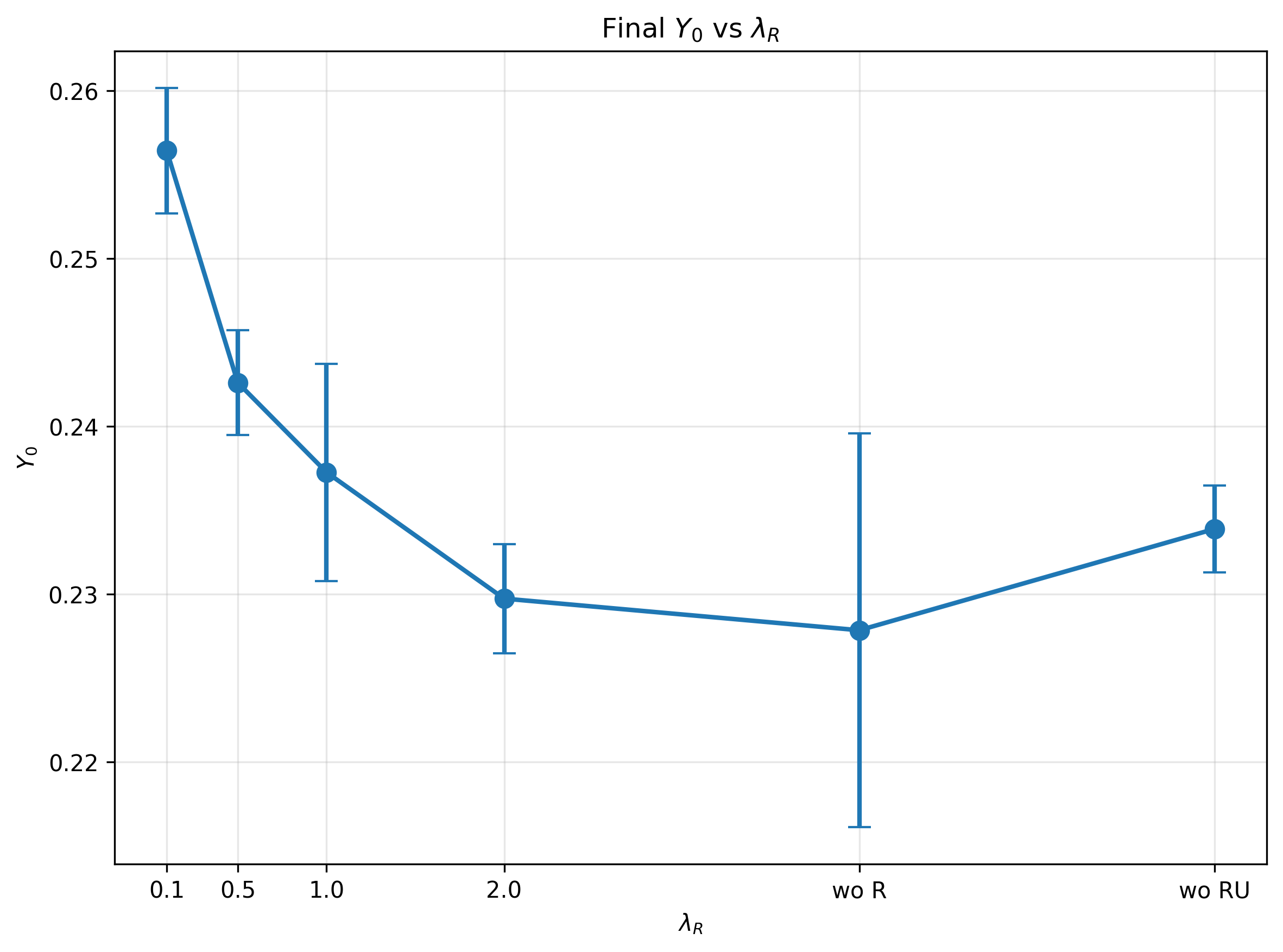}
        \caption{$Y_0$ vs.\ $\lambda_R$}
        \label{fig:sub-y0-betaB}
    \end{subfigure}
    \hfill
    \begin{subfigure}[b]{0.48\textwidth}
        \centering
        \includegraphics[width=\textwidth]{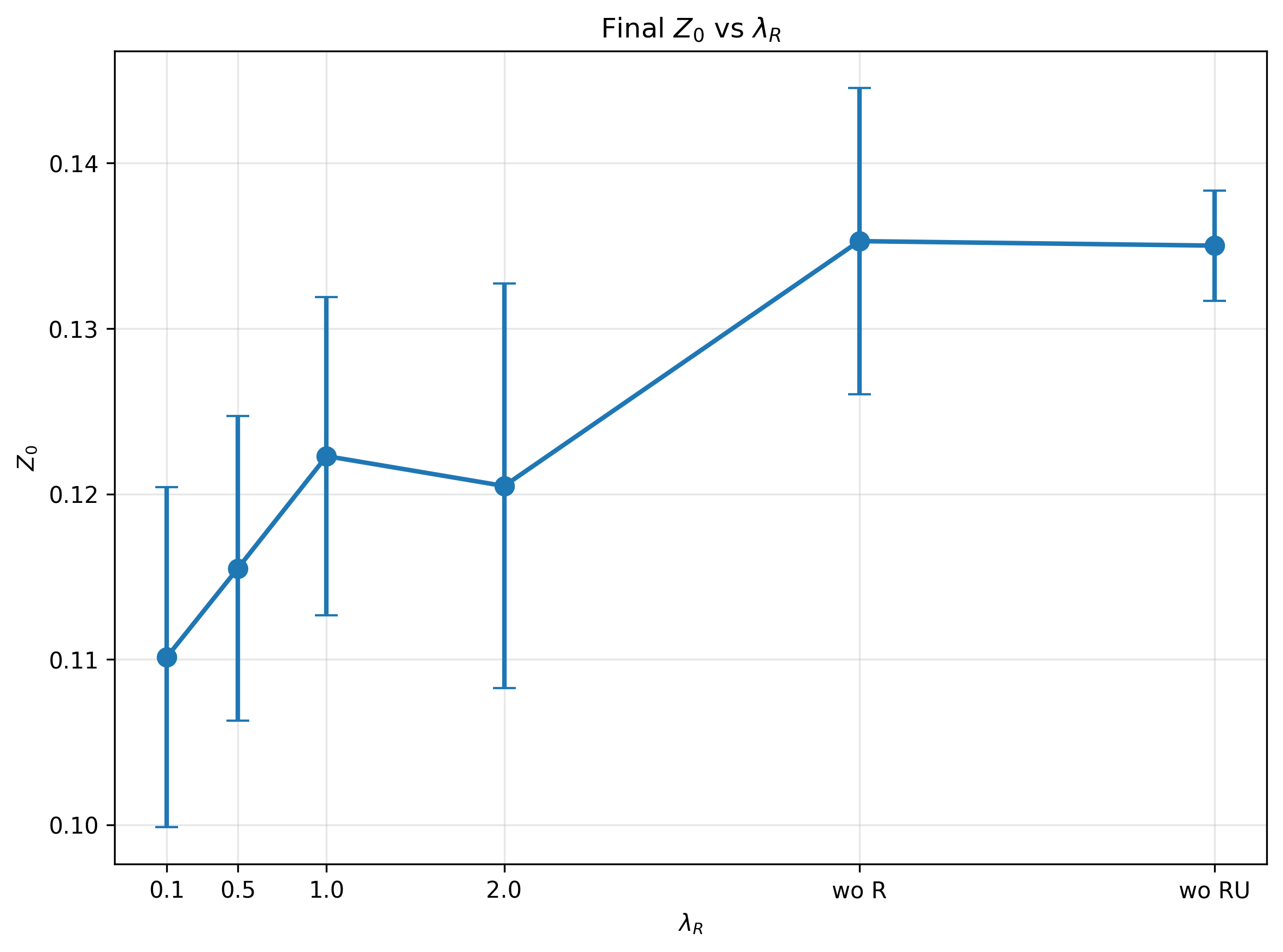}
        \caption{$\bar Z_0$ vs.\ $\lambda_R$}
        \label{fig:sub-z0-betaB}
    \end{subfigure}
    \caption{Dependence of the estimated $Y_0$ and $\bar Z_0$ on the residual penalty weight $\lambda_R$.}
    \label{fig:ablation_yz_betaB}
\end{figure}

\begin{figure}[htbp!]
    \centering
    \begin{subfigure}[b]{0.48\textwidth}
        \centering
        \includegraphics[width=\textwidth]{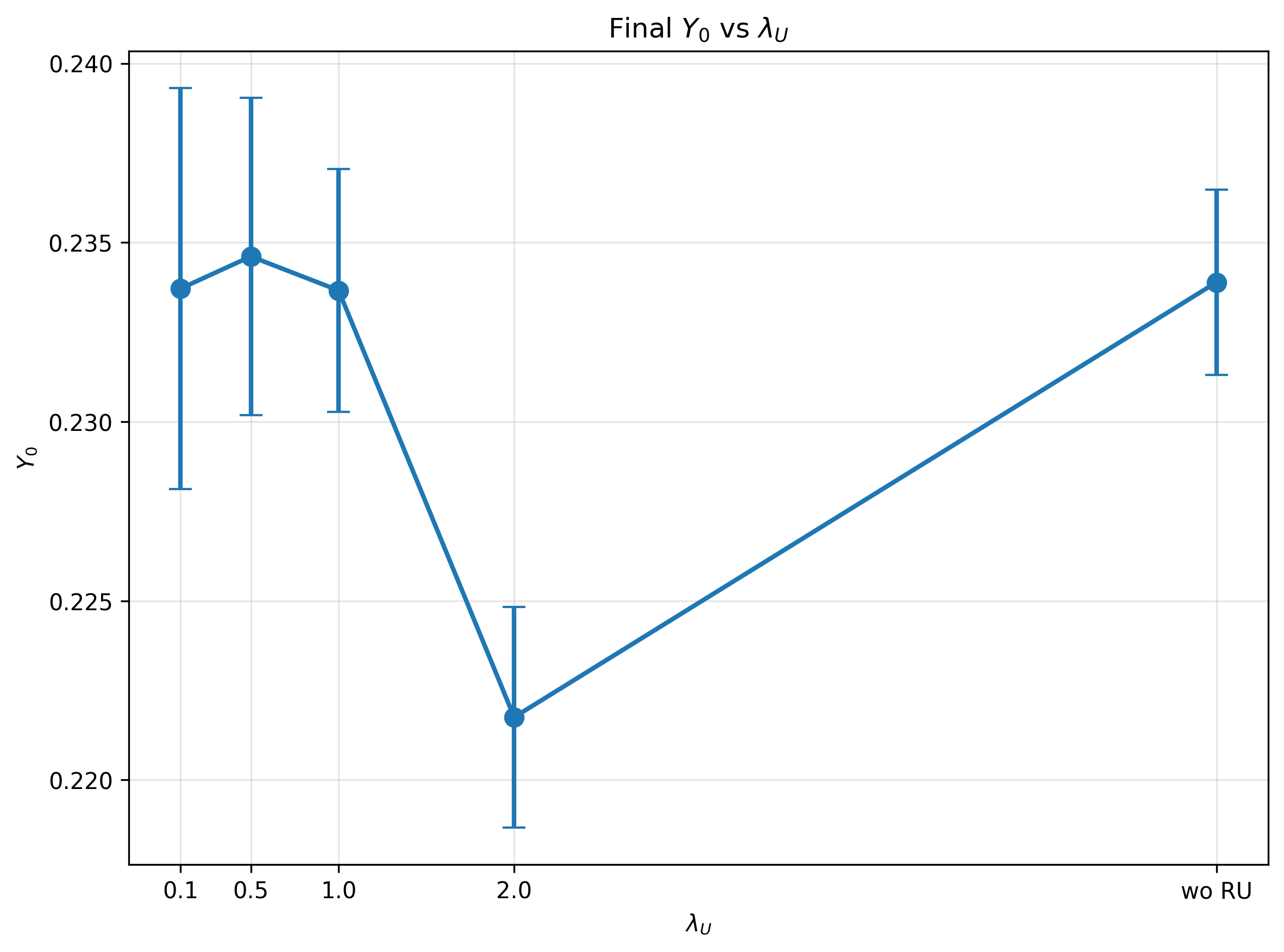}
        \caption{$Y_0$ vs.\ $\lambda_U$}
        \label{fig:sub-y0-betaP}
    \end{subfigure}
    \hfill
    \begin{subfigure}[b]{0.48\textwidth}
        \centering
        \includegraphics[width=\textwidth]{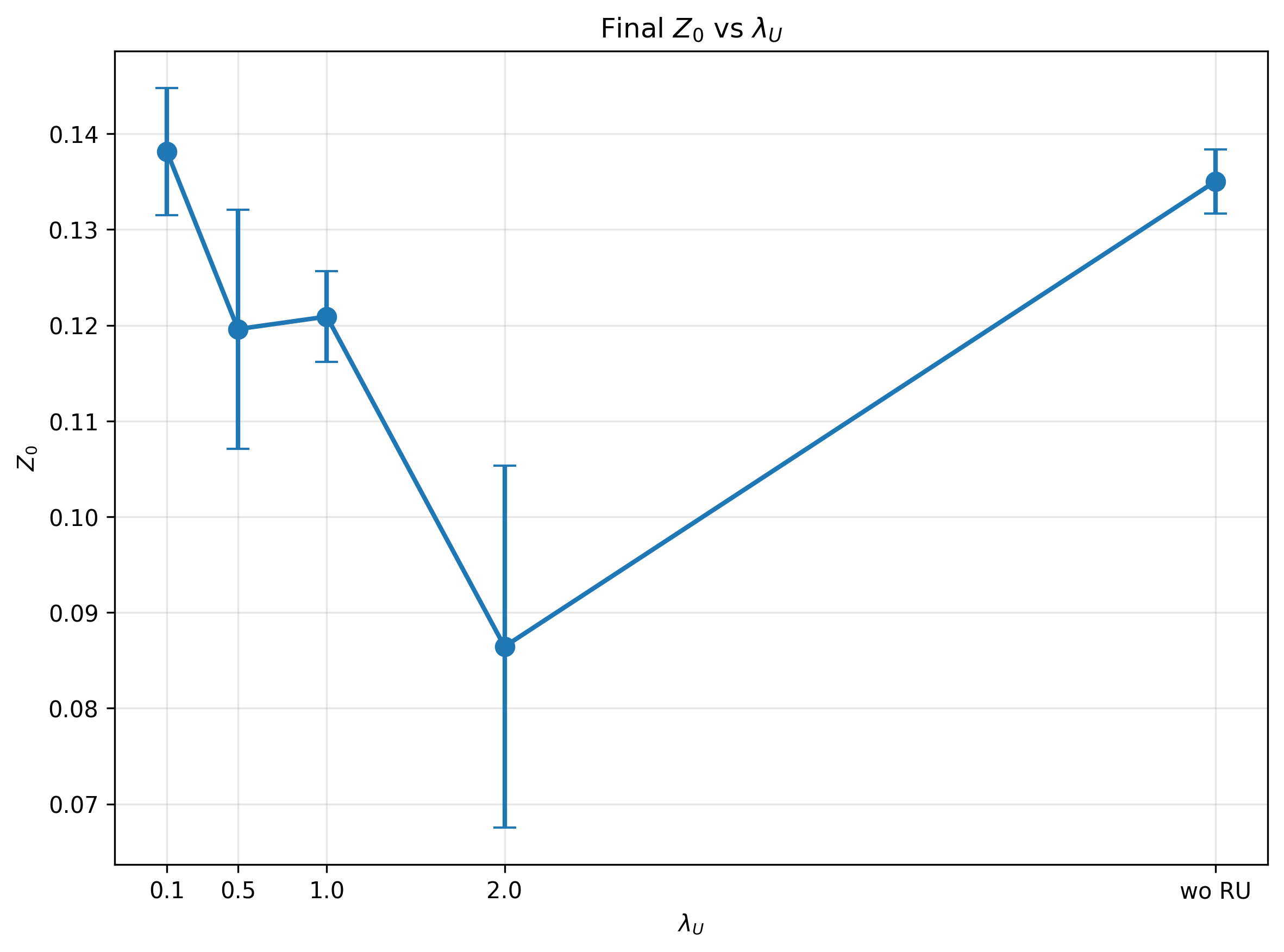}
        \caption{$\bar Z_0$ vs.\ $\lambda_U$}
        \label{fig:sub-z0-betaP}
    \end{subfigure}
    \caption{Dependence of the estimated $Y_0$ and $\bar Z_0$ on the mismatch penalty weight $\lambda_U$.}
    \label{fig:ablation_yz_betaP}
\end{figure}

The ablation results lead to the following observations.
First, the full-loss configuration produces stable estimates over a range of penalty weights.
The estimated $Y_0$ and the scalar summary of $Z_0$ change with $\lambda_R$ and $\lambda_U$,
but the variation is moderate for most parameter choices.
This suggests that the method is not overly sensitive to a single choice of penalty weight,
although excessively large or small weights may still affect the balance between the different constraints.
Second, removing the pathwise residual or the mismatch penalty changes the estimated control component more visibly than the estimated value component.
In particular, the ablation configurations tend to produce larger values of the reported $Z_0$ summary.
Since no exact solution is available in this example, this should not be described as a direct loss of accuracy.
A more appropriate interpretation is that the residual and mismatch penalties influence the consistency of the learned control process with the discrete FBSDE dynamics.
Third, the configuration without the residual term shows a larger variability in the estimate of $Y_0$ in Table~\ref{tab:ablation_betaB}.
This supports the view that the residual term plays a stabilizing role in the training process.
The terminal loss alone can enforce the final condition, but it does not directly control the intermediate backward dynamics.
The residual term therefore acts as an additional soft constraint that helps prevent the learned trajectory from fitting the terminal condition while deviating from the FBSDE structure in the interior of the time interval.

Overall, the Burgers-type experiment provides numerical evidence that the three-term loss suggested by the a posteriori estimate is useful in practice.
In the absence of a reference solution,
the indicators cannot prove the exact accuracy of the approximation,
but they provide computable diagnostics for terminal consistency, dynamic consistency, and control consistency.
This is precisely the practical role expected from the a posteriori framework.

\section{Conclusion}
\label{sec-conclusion}

In this paper, we developed an a posteriori error estimation framework for deep learning approximations of fully coupled forward-backward stochastic differential equations.
The analysis allows the forward coefficients to depend on an auxiliary control process that may differ from the backward component,
a formulation motivated by the decoupled training scheme in the second algorithm of~\cite{Peng_FBSDE_numerical} and by practical stabilization techniques such as target networks.
By introducing an explicit pathwise residual constraint $\mathfrak R_\pi$ into this decoupled framework, we obtained a fully computable a posteriori error bound that covers the terminal defect,
the pathwise residual, and the control mismatch.

The main theoretical contribution is a stability estimate for fully coupled FBSDEs under simultaneous perturbations of the drift, diffusion, generator, terminal condition, and control input.
Based on this continuous-time stability result, we derived a discrete-time a posteriori error estimate for arbitrary adapted approximations.
The resulting bound depends on three computable quantities: the terminal defect, the pathwise residual, and the control mismatch.
These quantities can be evaluated from the numerical solution itself and do not require prior knowledge of the exact solution.
In this sense, the estimate provides a genuine a posteriori diagnostic for deep FBSDE solvers.

The numerical experiments support the relevance of the theoretical framework.
For the linear-quadratic example with an explicit Riccati solution,
the computable indicators decrease during training and are consistent with the improvement of the estimated initial value.
The experiments with different time steps also show that a finer grid does not automatically lead to a better neural approximation,
because the increased number of time steps may make the optimization problem more difficult.
For the Burgers-type example without a closed-form reference solution,
the ablation studies indicate that the pathwise residual and the control mismatch terms play important stabilizing roles.
Although the absence of an exact solution prevents a direct measurement of the true error,
the indicators provide useful information about terminal consistency, dynamic consistency, and control consistency.

Several questions remain open.
The present analysis does not quantify how the choice of neural network architecture,
the number of trainable parameters,
the optimization algorithm,
or the Monte Carlo sample size affects the constants in the error estimates.
A systematic investigation of these issues would help connect the present a posteriori theory with more explicit convergence rate results.
It would also be interesting to extend the framework to FBSDEs with mean-field interaction,
reflecting boundaries, or more general path-dependent coefficients.
These topics are left for future research.

\bibliographystyle{alpha}
\bibliography{ref}

@article{hu1995solution,
title={Solution of forward-backward stochastic differential equations},
author={Hu, Ying and Peng, Shige},
journal={Probability Theory and Related Fields},
volume={103},
number={2},
pages={273--283},
year={1995}}

@article{PardouxPeng1990,
  author    = {Pardoux, \'Etienne and Peng, Shige},
  title     = {Adapted solution of a backward stochastic differential equation},
  journal   = {Systems and Control Letters},
  volume    = {14},
  number    = {1},
  pages     = {55--61},
  year      = {1990}
}

@article{PengWu1999,
  author    = {Peng, Shige and Wu, Zhen},
  title     = {Fully coupled forward-backward stochastic differential equations and applications to optimal control},
  journal   = {SIAM Journal on Control and Optimization},
  volume    = {37},
  number    = {3},
  pages     = {825--843},
  year      = {1999}
}

@article{ma1994solving,
  title={Solving forward-backward stochastic differential equations explicitly—a four step scheme},
  author={Ma, Jin and Protter, Philip and Yong, Jiongmin},
  journal={Probability theory and related fields},
  volume={98},
  number={3},
  pages={339--359},
  year={1994},
  publisher={Springer}
}

@book{MaYong1999,
  author    = {Ma, Jin and Yong, Jiongmin},
  title     = {Forward-Backward Stochastic Differential Equations and Their Applications},
  series    = {Lecture Notes in Mathematics},
  volume    = {1702},
  publisher = {Springer},
  year      = {1999}
}

@book{Bellman1957,
  author    = {Richard E. Bellman},
  title     = {Dynamic Programming},
  publisher = {Princeton University Press},
  year      = {1957}
}

@article{BouchardEkeland2004,
  author    = {Bouchard, Bruno and Ekeland, Ivar and Touzi, Nizar},
  title     = {On the {Malliavin} approach to {Monte Carlo} approximation of conditional expectations},
  journal   = {Finance and Stochastics},
  volume    = {8},
  number    = {1},
  pages     = {45--71},
  year      = {2004},
  doi       = {10.1007/s00780-003-0109-0}
}

@incollection{CruzeiroShamarova2011,
  author    = {Cruzeiro, Ana Bela and Shamarova, Evelina},
  title     = {On a forward-backward stochastic system associated to the {B}urgers equation},
  booktitle = {Stochastic Analysis with Financial Applications},
  series    = {Progress in Probability},
  volume    = {65},
  pages     = {43--59},
  year      = {2011},
  publisher = {Springer},
  address   = {New York, NY},
  doi       = {10.1007/978-3-0348-0097-6_4}
}

@article{Brze2014,
author = {Brze\'{z}niak, Zdzis\l{}aw and Goldys, Ben and Neklyudov, Misha},
title = {Multidimensional Stochastic Burgers Equation},
journal = {SIAM Journal on Mathematical Analysis},
volume = {46},
number = {1},
pages = {871-889},
year = {2014},
doi = {10.1137/120866117},

URL = { 
        https://doi.org/10.1137/120866117
},
eprint = { 
        https://doi.org/10.1137/120866117
}
}

@article{ReisingerStockingerZhang2024,
  author  = {Reisinger, Christoph and Stockinger, Wolfgang and Zhang, Yufei},
  title   = {A posteriori error estimates for fully coupled {McKean}--{Vlasov} forward-backward {SDEs}},
  journal = {IMA Journal of Numerical Analysis},
  volume  = {44},
  number  = {4},
  pages   = {2323--2369},
  year    = {2024},
  doi     = {10.1093/imanum/drad060}
}

@article{GermainMikaelWarin2022,
  author  = {Germain, Maximilien and Mikael, Joseph and Warin, Xavier},
  title   = {Numerical Resolution of {McKean}-{Vlasov} {FBSDEs} Using Neural Networks},
  journal = {Methodology and Computing in Applied Probability},
  volume  = {24},
  number  = {4},
  pages   = {2557--2586},
  year    = {2022},
  doi     = {10.1007/s11009-022-09946-z}
}

@article{BouchardTouzi2004,
  author    = {Bouchard, Bruno and Touzi, Nizar},
  title     = {Discrete-time approximation and {Monte-Carlo} simulation of backward stochastic differential equations},
  journal   = {Stochastic Processes and their Applications},
  volume    = {111},
  number    = {2},
  pages     = {175--206},
  year      = {2004}
}

@article{GobetLabart2008,
  author    = {Gobet, Emmanuel and Labart, C\'eline},
  title     = {Error expansion for the discretization of backward stochastic differential equations},
  journal   = {Stochastic Processes and their Applications},
  volume    = {118},
  number    = {5},
  pages     = {803--829},
  year      = {2008}
}

@article{HanJentzenE2018,
  author    = {Han, Jiequn and Jentzen, Arnulf and E, Weinan},
  title     = {Solving high-dimensional partial differential equations using deep learning},
  journal   = {Proceedings of the National Academy of Sciences},
  volume    = {115},
  number    = {34},
  pages     = {8505--8510},
  year      = {2018}
}

@article{RaissiPerdikarisKarniadakis2019,
  author    = {Raissi, Maziar and Perdikaris, Paris and Karniadakis, George E.},
  title     = {Physics-informed neural networks: A deep learning framework for solving forward and inverse problems involving nonlinear partial differential equations},
  journal   = {Journal of Computational Physics},
  volume    = {378},
  pages     = {686--707},
  year      = {2019}
}

@article{WeinanHanJentzen2017,
  author    = {E, Weinan and Han, Jiequn and Jentzen, Arnulf},
  title     = {Deep learning-based numerical methods for high-dimensional parabolic partial differential equations and backward stochastic differential equations},
  journal   = {Communications in Mathematics and Statistics},
  volume    = {5},
  number    = {4},
  pages     = {349--380},
  year      = {2017}
}

@article{HanLong2020,
  author    = {Han, Jiequn and Long, Jihao},
  title     = {Convergence of the deep {BSDE} method for coupled {FBSDEs}},
  journal   = {Probability, Uncertainty and Quantitative Risk},
  volume    = {5},
  number    = {1},
  pages     = {1--33},
  year      = {2020}
}

@article{GnoattoOberprillerPicarelli2025,
  author    = {Gnoatto, Alessandro and Oberpriller, Katharina and Picarelli, Athena},
  title     = {Convergence of a deep {BSDE} solver with jumps},
  journal   = {arXiv:2501.09727},
  year      = {2025}
}

@article{GaoEtAl2023,
  author    = {Gao, Chengfan and Chen, Sheng and Zhu, Zimu and Wang, Zhaojun},
  title     = {Convergence of the backward deep {BSDE} method with applications to optimal stopping problems},
  journal   = {SIAM Journal on Financial Mathematics},
  year      = {2023}
}

@article{Zhao2016Multistep,
  title={Multistep Schemes for Forward Backward Stochastic Differential Equations with Jumps},
  author={Fu, Yu and Zhao, Weidong and Zhou, Tao},
  journal={Journal of Scientific Computing},
  volume={69},
  number={2},
  pages={1-22},
  year={2016},
}

@InProceedings{Bender2012Monte,
author="Bender, Christian
and Steiner, Jessica",
editor="Carmona, Ren{\'e}  A.
and Del Moral, Pierre
and Hu, Peng
and Oudjane, Nadia",
title="Least-Squares Monte Carlo for Backward SDEs",
booktitle="Numerical Methods in Finance",
year="2012",
publisher="Springer Berlin Heidelberg",
address="Berlin, Heidelberg",
pages="257--289",
isbn="978-3-642-25746-9"
}

@article{AnderssonAnderssonOosterlee2025,
  author    = {Andersson, Kristoffer and Andersson, Adam and Oosterlee, Cornelis W.},
  title     = {The deep multi-{FBSDE} method: a robust deep learning method for coupled {FBSDEs}},
  journal   = {arXiv:2503.13193},
  year      = {2025}
}

@article{wu1999Fully,
title={Fully Coupled Forward-Backward Stochastic Differential Equations and Applications to Optimal Control},
author={Peng, Shige and Wu, Zhen},
journal={{SIAM} Journal on Control and Optimization},
volume={37},
number={3},
pages={825-843},
year={1999},
}

@article{peng1990stochasticmax,
title={A general stochastic maximum principle for optimal control problems},
author={Peng, Shige},
journal={Siam Journal on Control and Optimization},
volume={28},
number={4},
pages={966--979},
year={1990}}

@article{BenderZhang2008,
title={Time Discretization and Markovian Iteration for Coupled FBSDEs},
author={Christian Bender and Zhang, Jianfeng},
journal={Annals of Applied Probability},
volume = {18},
number = {1},
pages = {143-177},
year={2008}}

@ARTICLE{Peng_FBSDE_numerical,
author={Ji, Shaolin and Peng, Shige and Peng, Ying and Zhang, Xichuan},
journal={IEEE Intelligent Systems}, 
title={Three Algorithms for Solving High-Dimensional Fully Coupled FBSDEs Through Deep Learning}, 
year={2020},
volume={35},
number={3},
pages={71-84},}

@article{pham2018deep_1,
author = {Hur\'{e}, C\^{o}me and Pham, Huy\^{e}n and Bachouch, Achref and Langren\'{e}, Nicolas},
title = {Deep Neural Networks Algorithms for Stochastic Control Problems on Finite Horizon: Convergence Analysis},
journal = {SIAM Journal on Numerical Analysis},
volume = {59},
number = {1},
pages = {525-557},
year = {2021},
doi = {10.1137/20M1316640},

URL = { 
    
        https://doi.org/10.1137/20M1316640
    
    

},
eprint = { 
    
        https://doi.org/10.1137/20M1316640
    
}
}

@article{GobetTurkedjiev2016,
  author    = {Gobet, Emmanuel and Turkedjiev, Plamen},
  title     = {Linear regression {MDP} scheme for discrete backward stochastic differential equations under general conditions},
  journal   = {Mathematics of Computation},
  volume    = {85},
  number    = {299},
  pages     = {1359--1391},
  year      = {2016},
  doi       = {10.1090/mcom/3013}
}

@article{GobetLemorWarin2005,
  author    = {Gobet, Emmanuel and Lemor, Jean-Philippe and Warin, Xavier},
  title     = {A regression-based Monte Carlo method to solve backward stochastic differential equations},
  journal   = {Annals of Applied Probability},
  volume    = {15},
  number    = {3},
  pages     = {2172--2202},
  year      = {2005}
}

@article{pham2018deep_2,
title={Deep Neural Networks Algorithms for Stochastic Control Problems on Finite Horizon: Numerical Applications},
author={Bachouch, Achref and Hur\'{e}, C\^{o}me and Langren\'{e}, Nicolas and Pham, Huy\^{e}n},
journal={Methodology and Computing in Applied Probability},
year={2019},
volume={24},
number={1},
pages={143-178}
}

@article{finance1997,
author = {El Karoui, Nicole and Peng, Shige and Quenez, Marie Claire},
title = {Backward Stochastic Differential Equations in Finance},
journal = {Mathematical Finance},
volume = {7},
number = {1},
pages = {1-71},
doi = {https://doi.org/10.1111/1467-9965.00022},
url = {https://onlinelibrary.wiley.com/doi/abs/10.1111/1467-9965.00022},
eprint = {https://onlinelibrary.wiley.com/doi/pdf/10.1111/1467-9965.00022},
year = {1997}
}

@incollection{ElKarouiQuenez1997,
  author    = {El Karoui, Nicole and Quenez, Marie Claire},
  title     = {Imperfect Markets and Backward Stochastic Differential Equations},
  booktitle = {Numerical Methods in Finance},
  editor    = {Rogers, L. C. G. and Talay, D.},
  publisher = {Cambridge University Press},
  address   = {Cambridge},
  pages     = {181--214},
  year      = {1997},
  doi       = {10.1017/CBO9781139173056.010}
}

@article{DuffieEpstein1992a,
  author    = {Duffie, Darrell and Epstein, Larry G.},
  title     = {Stochastic differential utility},
  journal   = {Econometrica},
  volume    = {60},
  number    = {2},
  pages     = {353--394},
  year      = {1992}
}

@article{pengPDE,
title = {Probabilistic interpretation for systems of quasilinear parabolic partial differential equations},
author={Peng, Shige},
journal = {Stochastics and Stochastic Reports},
volume = {37},
number = {1-2},
pages = {61-74},
year  = {1991},
}


\end{document}